\documentclass[11pt,reqno]{amsart}
\usepackage[utf8]{inputenc}
\usepackage{amssymb}
\usepackage[all]{xy}
\usepackage{xcolor}
\usepackage{hyperref}
\usepackage{tikz}
\usepackage{amsmath}
\usepackage{amsfonts}

\newtheorem{thm}{Theorem}[section]
\newtheorem{lem}[thm]{Lemma}
\newtheorem{prop}[thm]{Proposition}

\newtheorem{lem-def}[thm]{Lemma-Definition}

\theoremstyle{definition}
\newtheorem{dfn}[thm]{Definition}

\theoremstyle{remark}
\newtheorem{remark}[thm]{Remark}

\newcommand{\CG}{{\mathcal{G}}}

\makeatletter
\@namedef{subjclassname@2020}{%
  \textup{2020} Mathematics Subject Classification}
\makeatother

\begin{document}

%\numberwithin{equation}{section}

\title[ Pullbacks of Groupoid $C^*$-Algebras over Disjoint Invariant Sets ]
{ Pullbacks of Groupoid $C^*$-Algebras over Disjoint Invariant Sets}

\author[Gilles G. de Castro]{Gilles G. de Castro}
\address{Departamento de Matem\'atica, Universidade Federal de Santa Catarina, 88040-970 Florian\'opolis SC, Brazil.} \email{gilles.castro@ufsc.br}

\author[E. J. Kang]{Eun Ji Kang}
\address{Research Institute of Mathematics, Seoul National University, Seoul 08826, 
Korea} \email{kkang33\-@\-snu.\-ac.\-kr}

\thanks{This research was suported by Basic Science Research Program through the 
National Research Foundation of Korea(NRF) funded by the Ministry of Education (RS-2026-25575087).}

\subjclass[2020]{Primary: 46L05, 22A22
Secondary:    46L55, 46L85 }

\keywords{\'Etale groupoids, groupoid \(C^*\)-algebras, pullback diagrams, graph \(C^*\)-algebras, topological graph \(C^*\)-algebras, boundary path spaces}

\begin{abstract}

We establish a pullback theorem for \(C^*\)-algebras of locally compact
Hausdorff étale groupoids. The theorem shows that, when the unit space is
covered by two closed invariant subsets whose complements are disjoint open
invariant subsets, the corresponding groupoid \(C^*\)-algebras form a
pullback diagram in the category of \(\mathbb T\)-\(C^*\)-algebras and
\(\mathbb T\)-equivariant \(*\)-homomorphisms, for the gauge actions induced
by a \(\mathbb Z\)-valued cocycle and its restrictions.

We then prove a collection of boundary-path decomposition
theorems for graphs, relative graphs, and topological graphs. We show that
admissible decompositions of graphs, together with their analogues in the
relative and topological settings, induce corresponding decompositions of
boundary path spaces. Combining these decomposition theorems with the
groupoid pullback theorem, we recover previously known pullback theorems for
graph \(C^*\)-algebras, relative graph \(C^*\)-algebras, and topological
graph \(C^*\)-algebras. Thus these pullback phenomena are explained by a
single groupoid-theoretic mechanism.

\end{abstract}

\maketitle

\section{Introduction}
Pullback diagrams play an important role in \(C^*\)-algebra theory, providing
an algebraic counterpart to gluing constructions in topology. They describe
how a \(C^*\)-algebra can be assembled from compatible pieces and often give
access to computational tools such as the Mayer--Vietoris six-term exact
sequence in \(K\)-theory.

A number of pullback constructions have appeared in the literature on
graph-related \(C^*\)-algebras. For graph \(C^*\)-algebras, Hajac and
collaborators introduced admissible decompositions of graphs and showed that
they give rise to pullback diagrams of graph \(C^*\)-algebras \cite{HRT}.
Analogous results were subsequently established for relative graph
\(C^*\)-algebras \cite{BS}, topological graph \(C^*\)-algebras \cite{GQT},
and higher-rank graph \(C^*\)-algebras \cite{KPSW2016}. Although these
results have a strikingly similar form, their proofs are generally carried
out within the particular combinatorial framework of the corresponding
class of graph-like objects.

It is therefore not apparent from the existing literature whether these
pullback constructions are essentially independent phenomena or instances
of a more general principle. One of the main aims of this paper is to
identify the common mechanism underlying them.

Our first main goal is a pullback theorem for \(C^*\)-algebras of
locally compact Hausdorff \'etale groupoids. More precisely, let
\(\mathcal G\) be a locally compact Hausdorff \'etale groupoid equipped with
a continuous cocycle $c:\mathcal G\to\mathbb Z$.
We show that if the unit space is covered by two closed invariant subsets
whose complements are disjoint open invariant subsets, then the associated
groupoid \(C^*\)-algebras form a pullback diagram in the category of
\(\mathbb T\)-\(C^*\)-algebras and \(\mathbb T\)-equivariant
\(*\)-homomorphisms. The \(\mathbb T\)-actions are the gauge actions induced
by \(c\) and by its restrictions to the corresponding reduction groupoids.
To the best of our knowledge, this is the first pullback theorem formulated
at the level of \'etale groupoid \(C^*\)-algebras.

The second main goal concerns the passage from combinatorial
decompositions to groupoid decompositions. Our groupoid pullback theorem applies
when the unit space admits an appropriate closed invariant cover. For
graph-related \(C^*\)-algebras, such covers are not usually visible directly
from the underlying graph-like object; rather, they appear at the level of
the boundary path space.

Accordingly, a key technical ingredient of the paper is a collection of
boundary-path decomposition theorems. We show that suitable combinatorial
decompositions of graph objects induce corresponding decompositions of
their boundary path spaces. These results translate combinatorial gluing data
into closed invariant covers of the unit spaces of the associated \'etale
groupoids, and are of independent interest.

Taken together, the groupoid pullback theorem and the boundary-path
decomposition results explain why pullback diagrams repeatedly occur in
graph-related \(C^*\)-algebras. In particular, previously known pullback
theorems for graph \(C^*\)-algebras, relative graph \(C^*\)-algebras, and
topological graph \(C^*\)-algebras are recovered from our groupoid theorem.

Thus, the present paper is not merely a generalization of existing pullback
results. Rather, it provides a conceptual explanation for why these pullback
phenomena arise. From this perspective, pullback diagrams for graph-related
\(C^*\)-algebras arising from regular inclusions could be understood as consequences of closed invariant
covers of the unit spaces of the associated \'etale groupoids.

The paper is organized as follows. In Section~\ref{sec:preliminaries}, we
recall the necessary background on \'etale groupoids, cocycles, and groupoid
\(C^*\)-algebras. In Section~\ref{sec:main}, we establish the groupoid
pullback theorem. The remaining sections are devoted to applications, where
we prove boundary-path decomposition theorems and recover previously known
pullback results for graph \(C^*\)-algebras, relative graph \(C^*\)-algebras,
and topological graph \(C^*\)-algebras as consequences of our groupoid
pullback theorem.

\section{Preliminaries}\label{sec:preliminaries}

\subsection{Groupoids and groupoid $C^*$-algebras}

A \emph{groupoid} $\CG$ is a small category in which every morphism $\gamma \in \CG$ has a unique inverse $\gamma^{-1} \in \CG$. For $\gamma \in \CG$, the \emph{range} and \emph{source} maps are given by
\[
r(\gamma) := \gamma \gamma^{-1}, \quad s(\gamma) := \gamma^{-1} \gamma,
\]
where composition is read from right to left. The set of \emph{composable pairs} is 
\[
\CG^{(2)} := \{ (\alpha, \beta) \in \CG \times \CG \mid s(\alpha) = r(\beta) \},
\]
and the \emph{unit space} is 
\[
\CG^{(0)} := r(\CG) = s(\CG).
\]

A \emph{topological groupoid} is a groupoid endowed with a topology under which composition and inversion are continuous. 
A locally compact Hausdorff groupoid $\CG$ is \emph{\'etale} if the range and source maps $r, s: \CG \to \CG^{(0)}$ are local homeomorphisms. A subset $B \subseteq \CG$ is called a \emph{bisection} if the restrictions $r|_B$ and $s|_B$ are injective; if $B$ is open in an \'etale groupoid, these restrictions are homeomorphisms onto open subsets of $\CG^{(0)}$. Every \'etale groupoid admits a basis of open bisections.
We say that an \'etale groupoid $\CG$ is \emph{ample} if it admits
a basis of compact-open bisections.

Let $\CG$ be a groupoid and $U \subseteq \CG^{(0)}$. We define
\[
\CG_U := \{ \gamma \in \CG \mid s(\gamma) \in U \}, \quad
\CG^U := \{ \gamma \in \CG \mid r(\gamma) \in U \}, \quad
\CG_U^U := \CG_U \cap \CG^U.
\]

A subset $U \subseteq \CG^{(0)}$ is called $\CG$-\emph{invariant} if 
for every $\gamma \in \CG$, $s(\gamma) \in U$ implies $r(\gamma) \in U$. 
Equivalently, $U$ is invariant exactly when $\CG_U = \CG^U$, 
so that $\CG_U$ is itself a subgroupoid of $\CG$ with unit space $U$.

Let $\CG$ be a locally compact Hausdorff étale groupoid. The space $C_c(\CG)$ of continuous compactly supported functions on $\CG$ has a convolution product and involution defined by
\[
(f_1 * f_2)(\gamma) := \sum_{\alpha \beta = \gamma} f_1(\alpha) f_2(\beta), 
\quad f^*(\gamma) := \overline{f(\gamma^{-1})}.
\]
The \emph{full groupoid $C^*$-algebra} $C^*(\CG)$ is the completion of $C_c(\CG)$ with respect to the universal $C^*$-norm.

The following standard result for full groupoid $C^*$-algebras will be used in the proof of our main theorem.

\begin{thm}{(well-known)}\label{thm:groupoid-ses}
Let $\CG$ be a locally compact Hausdorff étale groupoid and let
$U\subseteq \CG^{(0)}$ be an open $\CG$-invariant subset.
Then, $X:=\CG^{(0)}\setminus U$ is also $\CG$-invariant, $\CG_U$ is an open étale subgroupoid of $\CG$,
and $\CG_X$ is a closed étale subgroupoid of $\CG$.
Moreover, the restriction map $\rho:C_c(\CG)\to C_c(\CG_X)$
extends to a surjective $*$-homomorphism
$
\rho:C^*(\CG)\to C^*(\CG_X)
$
with kernel $\ker\rho=C^*(\CG_U).$
Consequently, there is a short exact sequence
\[
0
\longrightarrow
C^*(\CG_U)
\longrightarrow
C^*(\CG)
\overset{\rho}{\longrightarrow}
C^*(\CG_X)
\longrightarrow
0.
\]
\end{thm}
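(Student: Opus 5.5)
The plan is to handle the topological statements first, then the algebraic ones, and to get exactness for the full norms from the universal property. The main work is showing the kernel is no larger than $C^*(\CG_U)$.

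\emph{Topological part.} Invariance of $X$ is immediate. If $\gamma$ has $s(\gamma)\in X$ but $r(\gamma)\in U$, then applying invariance of $U$ to $\gamma^{-1}$ gives $s(\gamma)=r(\gamma^{-1})\in U$, a contradiction. Next, $\CG_U=s^{-1}(U)$ is open because $s$ is continuous. Since $U$ is invariant, $\CG_U=\CG^U=\CG_U^U$, so $\CG_U$ is a subgroupoid. It is locally compact Hausdorff as an open subset, and it is étale because the restrictions of $r,s$ to it remain local homeomorphisms. Likewise $\CG_X=s^{-1}(X)$ is a closed subgroupoid, hence locally compact Hausdorff. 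Its range and source maps are local homeomorphisms: if $B$ is an open bisection of $\CG$, then $B\cap\CG_X$ is mapped by $s$ homeomorphically onto $s(B)\cap X$, which is open in $X$.

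\emph{Algebraic part.} Restriction $f\mapsto f|_{\CG_X}$ sends $C_c(\CG)$ into $C_c(\CG_X)$ because $\CG_X$ is closed. It is a $*$-homomorphism because of invariance. For $\gamma\in\CG_X$, every factorization $\gamma=\alpha\beta$ has $s(\beta)=s(\gamma)\in X$ and $s(\alpha)=r(\beta)\in X$, so the convolution sum only involves values on $\CG_X$. Surjectivity at the $C_c$ level follows from the Tietze extension theorem applied on a compact neighbourhood, or via partitions of unity over bisections. The composition of restriction with the universal representation of $C^*(\CG_X)$ is a representation of $C_c(\CG)$, so it is bounded by the universal norm. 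Hence $\rho$ extends to $C^*(\CG)$, and its image is dense and closed, so $\rho$ is surjective.

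Extending functions by zero embeds $C_c(\CG_U)$ into $C_c(\CG)$ as a $*$-subalgebra, and in fact as an ideal, because $C_c(\CG)*C_c(\CG_U)$ is supported in $\CG_U$ by invariance. I would then identify the closure $I$ of $C_c(\CG_U)$ in $C^*(\CG)$ with $C^*(\CG_U)$, which requires comparing norms. One inequality is trivial. For the other, every representation of $C_c(\CG_U)$ extends to a representation of $C_c(\CG)$ on the essential subspace, since $C_c(\CG_U)$ is an ideal; the standard argument for extending representations from an ideal, via an approximate unit from $C_c(U)$, applies here. Clearly $I\subseteq\ker\rho$.

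\emph{Main obstacle: $\ker\rho\subseteq I$.} My approach is to show that the induced map $C^*(\CG)/I\to C^*(\CG_X)$ is isometric on the image of $C_c(\CG)$. Let $\pi$ be a representation of $C^*(\CG)$ vanishing on $I$, and let $f\in C_c(\CG)$ with $f|_{\CG_X}=0$. I would show $\pi(f)=0$ by approximating $f$ in the universal norm, or by an inductive-limit-type norm, by elements of $C_c(\CG_U)$. Cover $\operatorname{supp} f$ by finitely many open bisections $B_i$ and write $f=\sum f_i$ with each $f_i$ supported in $B_i$. Since $f_i$ vanishes on $B_i\cap\CG_X$, the function $f_i$ corresponds to a function on $s(B_i)$ that vanishes on $X$. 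Such a function can be uniformly approximated by functions compactly supported in $U$. For functions supported on a single bisection, the universal norm equals the sup norm, so this approximation is valid in $C^*(\CG)$. Consequently $\pi(f)=0$.

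It follows that every representation of $C_c(\CG)$ that annihilates $I$ factors through restriction to $\CG_X$. I would check that it then defines a representation of $C_c(\CG_X)$, which is well defined by the vanishing result, and is bounded by the universal norm of $C^*(\CG_X)$. That gives $\|f+I\|\le\|\rho(f)\|$, so $\ker\rho=I=C^*(\CG_U)$ and the sequence is exact.
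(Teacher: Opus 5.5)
The paper gives no proof of this statement. It is quoted as well known and used only as a black box in the proof of Theorem~\ref{thm:disjoint open invariant sets}.

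Your argument is correct. It is the standard elementary proof for étale groupoids, as found for instance in Sims' lecture notes on étale groupoids. The general non-étale proof going back to Renault and Muhly--Renault--Williams goes through the disintegration theorem instead: it shows that representations vanishing on $C^*(\CG_U)$ have quasi-invariant measure concentrated on $X$. Your route needs neither second countability nor disintegration, only the estimate $\|h\|\le\|h\|_\infty$ for $h$ supported in an open bisection. Renault's route, in exchange, covers non-étale groupoids with Haar systems.

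You correctly avoid the real trap, namely that $\ker\rho$ need not be the closure of $\ker\rho\cap C_c(\CG)$. Defining $\pi_X(\rho(f)):=\pi(f)$ on all of $C_c(\CG_X)$ (using surjectivity of $\rho$ at the $C_c$ level) and then extending $\|a+I\|\le\|\rho(a)\|$ by density is exactly the right move. Note that your bisection approximation actually shows $f\in I$, not merely $\pi(f)=0$. That stronger fact is what makes $\pi_X$ well defined.

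Two details deserve a line each when you write it up.

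First, the universal norm must be taken as the supremum over \emph{all} $*$-representations of $C_c$ on Hilbert space. For étale groupoids this is finite, because $\|\pi(f)\|\le\sum_i\|f_i\|_\infty$ whenever $f=\sum_i f_i$ with each $f_i$ supported in an open bisection. With this convention $\pi_X$ is automatically dominated by $\|\cdot\|_{C^*(\CG_X)}$, with no continuity check needed.

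Second, when extending a representation $\pi$ of $C_c(\CG_U)$ to $C_c(\CG)$, the bound comes from a suitable choice of $e\in C_c(U)$. Choose $e$ with $0\le e\le 1$ and $e=1$ on $\bigcup_k r(\operatorname{supp} g_k)$. Then $\sum_k\pi(f*g_k)\xi_k=\pi(f*e)\sum_k\pi(g_k)\xi_k$. Moreover $\|\pi(f*e)\|\le\sum_i\|f_i\|_\infty$, since each $f_i*e$ is supported in the bisection $B_i\cap\CG_U$ with sup norm at most $\|f_i\|_\infty$.
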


\subsection{Gauge actions}

Let $\CG$ be a locally compact Hausdorff étale groupoid and let
$c:\CG\to\mathbb Z$ be a continuous cocycle.
The cocycle $c$ induces a strongly continuous action
$
\gamma:\mathbb T\to\operatorname{Aut}(C^*(\CG))
$
given on $C_c(\CG)$ by
$$
(\gamma_z(f))(\eta)
=
z^{c(\eta)}f(\eta)
$$
for all $z\in\mathbb T$, $f\in C_c(\CG)$ and $\eta\in\CG$.
We call $\gamma$ the \emph{gauge action} associated to $c$.

A $\mathbb T$-$C^*$-algebra is a $C^*$-algebra equipped with a strongly continuous action of $\mathbb T$.
Recall that if $(A,\alpha)$ and $(B,\beta)$ are $\mathbb T$-$C^*$-algebras, then a $*$-homomorphism $\Phi:A\to B$
is said to be {\it $\mathbb T$-equivariant} if
$$
\Phi\circ\alpha_z
=
\beta_z\circ\Phi
$$
for all $z\in\mathbb T$.

In particular, if
$c_i:\CG_i\to\mathbb Z$
are cocycles inducing gauge actions
$\gamma^{(i)}$ on
$C^*(\CG_i)$ for $i=1,2$,
then a $*$-homomorphism
$
\Phi:C^*(\CG_1)\to C^*(\CG_2)
$
is $\mathbb T$-equivariant (or \emph{gauge-equivariant}) if
$$
\Phi\circ\gamma^{(1)}_z
=
\gamma^{(2)}_z\circ\Phi
$$
for all $z\in\mathbb T$.

\section{Pullbacks of groupoid \(C^*\)-algebras}\label{sec:main}

We first recall the notion of a pullback for \(C^*\)-algebras. Suppose that
\[
A \xrightarrow{\ \mu_1\ } C \xleftarrow{\ \mu_2\ } B
\]
are \( * \)-homomorphisms between \(C^*\)-algebras.
 The corresponding pullback \(C^*\)-algebra is 
\[
A\oplus_C B
:=
\{(a,b)\in A\oplus B:\mu_1(a)=\mu_2(b)\},
\]
 equipped with the canonical coordinate projections onto
\(A\) and \(B\).
If \(A\), \(B\), and \(C\) are equipped with strongly continuous actions
\[
\alpha:\mathbb T\curvearrowright A,
\qquad
\beta:\mathbb T\curvearrowright B,
\qquad
\gamma:\mathbb T\curvearrowright C,
\]
and if \(\mu_1\) and \(\mu_2\) are \(\mathbb T\)-equivariant, then the
pullback \(A\oplus_C B\) is preserved by the componentwise action
\[
\delta_z(a,b)
=
(\alpha_z(a),\beta_z(b)).
\]
Indeed, if \((a,b)\in A\oplus_C B\), then
\[
\mu_1(\alpha_z(a))
=
\gamma_z(\mu_1(a))
=
\gamma_z(\mu_2(b))
=
\mu_2(\beta_z(b)),
\]
so that
\[
(\alpha_z(a),\beta_z(b))\in A\oplus_C B.
\]
Thus \((A\oplus_C B,\delta)\), together with the coordinate projections,
is the pullback of the diagram
\[
A \xrightarrow{\ \mu_1\ } C \xleftarrow{\ \mu_2\ } B
\]
in the category of \(\mathbb T\)-\(C^*\)-algebras and
\(\mathbb T\)-equivariant \(*\)-homomorphisms.

We now state the main pullback theorem for groupoid \(C^*\)-algebras.

\begin{thm}\label{thm:disjoint open invariant sets}
Let $\CG$ be a locally compact Hausdorff \'etale groupoid, and let
$c:\CG\to\mathbb Z$ be a continuous groupoid cocycle.
Let $U_1,U_2\subseteq\CG^{(0)}$ be open $\CG$-invariant subsets, and set
$X_i:=\CG^{(0)}\setminus U_i$ for $i=1,2$.
Assume that $U_1\cap U_2=\emptyset$. Then the diagram
\[
\begin{tikzpicture}
  \node (P) at (0,2) {$C^*(\CG)$};
  \node (A) at (-2,0) {$C^*(\CG_{X_1})$};
  \node (B) at (2,0) {$C^*(\CG_{X_2})$};
  \node (C) at (0,-2) {$C^*(\CG_{X_1\cap X_2})$};

  \draw[->] (P) -- (A) node[midway, above left] {$\rho_1$};
  \draw[->] (P) -- (B) node[midway, above right] {$\rho_2$};
  \draw[->] (A) -- (C) node[midway, below left] {$\chi_1$};
  \draw[->] (B) -- (C) node[midway, below right] {$\chi_2$};
\end{tikzpicture}
\]
is a pullback diagram with respect to the canonical restriction
homomorphisms in the category of $\mathbb T$-$C^*$-algebras and
$\mathbb T$-equivariant $*$-homomorphisms, where the
$\mathbb T$-actions are the gauge actions induced by $c$ and its
restrictions to the corresponding subgroupoids.

Equivalently, the map
\[
C^*(\CG)
\longrightarrow
C^*(\CG_{X_1})
\oplus_{C^*(\CG_{X_1\cap X_2})}
C^*(\CG_{X_2}),
\qquad
a\longmapsto\bigl(\rho_1(a),\rho_2(a)\bigr),
\]
is a $\mathbb T$-equivariant $*$-isomorphism.
\end{thm}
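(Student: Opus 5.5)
The plan is to reduce the statement to the standard fact that two surjections with kernels $I_1,I_2$ satisfying $I_1\cap I_2=0$ exhibit $A$ as the pullback of $A/I_1\to A/(I_1+I_2)\leftarrow A/I_2$. All the groupoid input comes from Theorem~\ref{thm:groupoid-ses}.

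First, the setup. Each $X_i$ is closed and invariant, and so is $X_1\cap X_2$. Its complement $U_1\cup U_2$ is open and invariant. Theorem~\ref{thm:groupoid-ses} then gives the surjection $\rho_i:C^*(\CG)\to C^*(\CG_{X_i})$ with kernel $I_i:=C^*(\CG_{U_i})$. Since $X_1\cap X_2$ is closed and invariant in $X_1$, with complement $U_2\cap X_1=U_2$ (because $U_2\subseteq X_1$, as $U_1\cap U_2=\emptyset$), the same theorem applied to $\CG_{X_1}$ gives the surjection $\chi_1$; symmetrically we get $\chi_2$. On $C_c(\CG)$ we have $\chi_i\circ\rho_i=$ restriction to $\CG_{X_1\cap X_2}$, so by density and continuity the square commutes, with both composites equal to the quotient map $\rho_{12}$ of Theorem~\ref{thm:groupoid-ses} for the open set $U_1\cup U_2$. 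Equivariance of every map is immediate on $C_c$, since restriction commutes with multiplication by $z^{c(\eta)}$, and it extends by continuity. Hence $\Phi(a)=(\rho_1(a),\rho_2(a))$ is a $\mathbb T$-equivariant $*$-homomorphism into the pullback. By the discussion preceding the theorem, it only remains to show that $\Phi$ is bijective.

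Injectivity is equivalent to $I_1\cap I_2=0$. For closed ideals in a $C^*$-algebra, $I_1\cap I_2=I_1I_2$, so it suffices to show $C^*(\CG_{U_1})\,C^*(\CG_{U_2})=0$. For $f\in C_c(\CG_{U_1})$ and $g\in C_c(\CG_{U_2})$, a nonzero term $f(\alpha)g(\beta)$ in $f*g$ needs $s(\alpha)=r(\beta)$ with $s(\alpha)\in U_1$ and $r(\beta)\in U_2$ (by invariance). That is impossible since $U_1\cap U_2=\emptyset$, so $f*g=0$, and the claim follows by density.

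Surjectivity is the step I expect to need the most care. Given $(a,b)$ with $\chi_1(a)=\chi_2(b)$, pick $x,y\in C^*(\CG)$ with $\rho_1(x)=a$ and $\rho_2(y)=b$. Then $\rho_{12}(x-y)=0$, so $x-y\in\ker\rho_{12}=C^*(\CG_{U_1\cup U_2})$. The key claim is
\[
C^*(\CG_{U_1\cup U_2})=C^*(\CG_{U_1})+C^*(\CG_{U_2}).
\]
At the $C_c$ level, $\CG_{U_1\cup U_2}$ is the disjoint union of the open sets $\CG_{U_1}$ and $\CG_{U_2}$, so every compactly supported function splits. The sum of two closed ideals in a $C^*$-algebra is closed, so the sum on the right is a closed ideal containing a dense subset of the left-hand side. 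This gives equality. (One must also check that the closure of $C_c(\CG_{U_i})$ inside $C^*(\CG)$ is the ideal identified with $C^*(\CG_{U_i})$; this is part of Theorem~\ref{thm:groupoid-ses}.)

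Now write $x-y=k_1+k_2$ with $k_i\in I_i$, and set $w:=x-k_1=y+k_2$. Then $\rho_1(w)=\rho_1(x)=a$ and $\rho_2(w)=\rho_2(y)=b$, so $\Phi(w)=(a,b)$. Thus $\Phi$ is an equivariant $*$-isomorphism onto the pullback, which proves the theorem.
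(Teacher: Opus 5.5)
Your proof is correct and is essentially the paper's argument: the kernel identifications via Theorem~\ref{thm:groupoid-ses}, the injectivity step $C^*(\CG_{U_1})\,C^*(\CG_{U_2})=0$ from $U_1\cap U_2=\emptyset$, and the equivariance check on $C_c$ all match. The one difference is the surjectivity step. The paper verifies $\rho_1(\ker\rho_2)=\ker\chi_1$ and then invokes Pedersen's pullback criterion (Proposition~3.1 of its reference), whereas you prove surjectivity by hand from $\ker(\chi_1\circ\rho_1)=C^*(\CG_{U_1})+C^*(\CG_{U_2})$; since $\rho_1$ is surjective, this identity is equivalent to the paper's condition, so your version is a self-contained form of the same proof.
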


\begin{proof} We first note that
\[
\CG=\CG_{X_1}\cup\CG_{X_2}
\qquad\text{and}\qquad
\CG_{X_1}\cap\CG_{X_2}
=
\CG_{X_1\cap X_2}.
\]
Indeed, let $g\in\CG$. If $s(g)\in X_1$, then the invariance of
$X_1$ implies that $r(g)\in X_1$, and hence
$g\in\CG_{X_1}$. If $s(g)\notin X_1$, then $s(g)\in U_1$.
Since $U_1\cap U_2=\emptyset$, we have $s(g)\notin U_2$, and thus
$s(g)\in X_2$. The invariance of $X_2$ then implies that
$r(g)\in X_2$, so $g\in\CG_{X_2}$. Therefore,
\[
\CG=\CG_{X_1}\cup\CG_{X_2}.
\]
Moreover, by the definition of a reduction,
\[
\begin{aligned}
\CG_{X_1}\cap\CG_{X_2}
&=
\{g\in\CG:s(g),r(g)\in X_1\}
\cap
\{g\in\CG:s(g),r(g)\in X_2\}\\
&=
\{g\in\CG:s(g),r(g)\in X_1\cap X_2\}\\
&=
\CG_{X_1\cap X_2}.
\end{aligned}
\]

By Theorem \ref{thm:groupoid-ses}, we have a surjective $*$-homomorphism $\rho_i:C^*(\CG) \to C^*(\CG_{X_i})$ such that $\ker \rho_i=C^*(\CG_{U_i})$ for $i=1,2$.
Also, since $U_2 $ is an open subset of $\CG_{X_1}^{(0)}(=X_1= \CG^{(0)} \setminus U_1)$ and $\CG_{X_1 \cap X_2}=(\CG_{X_1})_{X_1 \setminus U_2}$, we have by Theorem \ref{thm:groupoid-ses} again a surjective $*$-homomorphism $\chi_1:C^*(\CG_{X_1}) \to C^*(\CG_{X_1 \cap X_2})$ with $\ker \chi_1=C^*((\CG_{X_1})_{U_2})=C^*(\CG_{U_2})$.
Noticing that  $\CG_{X_1 \cap X_2}=(\CG_{X_2})_{X_2 \setminus U_1}$,  we similarly have  a surjective $*$-homomorphism $\chi_2:C^*(\CG_{X_2}) \to C^*(\CG_{X_1 \cap X_2})$ with $\ker \chi_2=C^*((\CG_{X_2})_{U_1})=C^*(\CG_{U_1})$. The commutativity of the diagram is obvious. We claim that 
\begin{enumerate}
\item $\ker \rho_1 \cap \ker \rho_2=\{0\}$,
\item $\chi_2^{-1}(\chi_1(C^*(\CG_{X_1})))=\rho_2(C^*(\CG))$, and
\item $\rho_1(\ker(\rho_2)) = \ker(\chi_1)$.
\end{enumerate}
Indeed, for (1), note that $\ker \rho_i$ is a closed ideal in a $C^*$-algebra and that $\ker \rho_i = C^*(\CG_{U_i})$ for $i=1,2$. Hence,
\[
\ker \rho_1 \cap \ker \rho_2 = \ker \rho_1 \ker \rho_2 = C^*(\CG_{U_1})C^*(\CG_{U_2}).
\]
Choose $f_1 \in C_c(\CG_{U_1})$ and $f_2 \in C_c(\CG_{U_2})$ supported in bisections $V_1$ and $V_2$, respectively. Then
\[
f_1*f_2(g)= 
\begin{cases}
f_1(r_{V_1}^{-1}(r(g)))f_2(s_{V_2}^{-1}(s(g))) & \text{if } g \in V_1V_2,\\[2mm]
0 & \text{otherwise},
\end{cases}
\]
where $r_{V_1}$ and $s_{V_2}$ denote the restrictions of $r$ and $s$ to $V_1$ and $V_2$, respectively.
Since $U_1 \cap U_2=\emptyset$, we have $V_1V_2=\emptyset$, and hence $f_1*f_2=0$. 
Thus $C^*(\CG_{U_1})C^*(\CG_{U_2})=0$, and consequently $\ker \rho_1 \cap \ker \rho_2=\{0\}$.

For (2), we compute
\[
\chi_2^{-1}(\chi_1(C^*(\CG_{X_1})))
    =\chi_2^{-1}(C^*(\CG_{X_1 \cap X_2}))
    =C^*(\CG_{X_2})
    =\rho_2(C^*(\CG)),
\]
since $\chi_2$, $\chi_1$, and $\rho_2$ are all surjective.

Finally, for (3), since $\rho_1$ acts as the identity on $C^*(\CG_{U_2})$, we have
\[
\rho_1(\ker(\rho_2)) = \rho_1(C^*(\CG_{U_2})) = C^*(\CG_{U_2}) = \ker(\chi_1).
\]
Thus, by \cite[Proposition 3.1]{P}, the diagram is a pullback diagram.

We now check that all maps are $\mathbb T$-equivariant with respect to the
gauge actions induced by $c$. Since $c$ restricts to cocycles
\[
c_i:=c|_{\CG_{X_i}},
\qquad
c_{12}:=c|_{\CG_{X_1\cap X_2}},
\]
the gauge actions on $C^*(\CG_{X_i})$ and
$C^*(\CG_{X_1\cap X_2})$ are determined on compactly supported functions by
\[
(\gamma^{(i)}_z f)(g)=z^{c_i(g)}f(g),
\qquad
(\gamma^{(12)}_z f)(g)=z^{c_{12}(g)}f(g).
\]

For $f\in C_c(\CG)$ and $g\in \CG$, we have
\[
(\gamma_z^{(i)}\circ\rho_i)(f)(g)
=
\begin{cases}
z^{c_i(g)}f(g) & \text{if } g\in\CG_{X_i},\\[2mm]
0 & \text{otherwise},
\end{cases}
\]
while
\[
(\rho_i\circ\gamma_z)(f)(g)
=
\begin{cases}
z^{c(g)}f(g) & \text{if } g\in\CG_{X_i},\\[2mm]
0 & \text{otherwise}.
\end{cases}
\]
Since $c_i=c|_{\CG_{X_i}}$, the two expressions coincide.
Hence, we have
\[
\rho_i\circ\gamma_z
=
\gamma^{(i)}_z\circ\rho_i
\]
on $C_c(\CG)$ for $i=1,2$. Similarly, for $f\in C_c(\CG_{X_i})$ and
$g\in \CG_{X_1\cap X_2}$, we obtain
\[
\chi_i(\gamma^{(i)}_z f)(g)
=
(\gamma^{(12)}_z\chi_i(f))(g).
\]
Thus
\[
\chi_i\circ\gamma^{(i)}_z
=
\gamma^{(12)}_z\circ\chi_i
\]
on $C_c(\CG_{X_i})$ for $i=1,2$.

Since the gauge actions and the maps \(\rho_i\) and \(\chi_i\) are continuous,
the above identities extend from the dense subalgebras of compactly
supported functions to the full groupoid \(C^*\)-algebras. Hence all maps in
the diagram are \(\mathbb T\)-equivariant.

Let
\[
\Theta:C^*(\CG)
\longrightarrow
C^*(\CG_{X_1})
\oplus_{C^*(\CG_{X_1\cap X_2})}
C^*(\CG_{X_2})
\]
be the \( * \)-isomorphism induced by the pullback diagram, namely
\[
\Theta(a)=(\rho_1(a),\rho_2(a)).
\]
We equip the pullback algebra with the componentwise gauge action
\[
\delta_z(b_1,b_2)
=
(\gamma^{(1)}_z(b_1),\gamma^{(2)}_z(b_2)).
\]
Then, for \(a\in C^*(\CG)\) and \(z\in \mathbb T\), we have
\[
\delta_z(\Theta(a))
=
(\gamma^{(1)}_z(\rho_1(a)),\gamma^{(2)}_z(\rho_2(a)))
=
(\rho_1(\gamma_z(a)),\rho_2(\gamma_z(a)))
=
\Theta(\gamma_z(a)).
\]
Thus, \(\Theta\) is \(\mathbb T\)-equivariant. Therefore the diagram is a
pullback diagram in the category of \(\mathbb T\)-\(C^*\)-algebras and
\(\mathbb T\)-equivariant \( * \)-homomorphisms.
\end{proof}

\section{Applications}

\subsection{Examples I and II: Graphs and Relative Graphs}

We begin with directed graphs and relative graphs, showing  showing how their pullback results about admissible dcomposition follow from Theorem~\ref{thm:disjoint open invariant sets}.
The key step is to translate admissible combinatorial decompositions into
decompositions of the boundary path spaces of the associated graph groupoids.
After recalling the necessary boundary path spaces and groupoids, we establish
the corresponding decomposition results and apply the groupoid pullback
theorem.

\subsubsection{Boundary path spaces}

Let $E=(E^0,E^1,r,s)$ be a directed graph, where $E^0$ and $E^1$ denote the sets of vertices and edges, respectively, and
$r,s\colon E^1\to E^0$ are the range and source maps.

A vertex $v\in E^0$ is called a \emph{sink} if $s^{-1}(v)=\emptyset$, and an
\emph{infinite emitter} if $\lvert s^{-1}(v)\rvert=\infty$.
The vertex $v$ is called \emph{regular} if $0<\lvert s^{-1}(v)\rvert<\infty$,
and \emph{singular} otherwise.

We write
\[
\operatorname{reg}(E)
:=\{v\in E^0 : 0<\lvert s^{-1}(v)\rvert<\infty\},
\qquad
\operatorname{sing}(E)
:=E^0\setminus \operatorname{reg}(E).
\]

A \emph{finite path} in $E$ is a sequence $u=e_1\cdots e_n$ of edges such that
$r(e_i)=s(e_{i+1})$ for $1\le i<n$.
We write $|u|=n$ for its length, and identify each vertex $v\in E^0$ with a path of length $0$.
For each $n\ge 0$, let $E^n$ denote the set of all paths of length $n$ in $E$, and set
\[
E^*:=\bigcup_{n\ge 0} E^n.
\]

An \emph{infinite path} in $E$ is an infinite sequence $e_1e_2\cdots$ of edges in $E$
such that $r(e_i)=s(e_{i+1})$ for all $i\in\mathbb N$.
We write $E^\infty$ for the set of all infinite paths in $E$, and define 
\emph{the path space} of $E$ by
\[
E^{\le\infty}:= E^\infty \cup E^*.
\]

A \emph{relative graph} (see \cite{MT2004} and
\cite[Definition~5.1]{BS}) is a pair $(E,R)$ consisting of a directed
graph $E$ and a subset $R\subseteq \operatorname{reg}(E)$. Following \cite[Section~2]{CL2016}, the \emph{relative boundary path space}
of $(E,R)$ is defined by
\[
\partial_R E
:=E^\infty \cup \{u\in E^* : r(u)\notin R\}.
\]
When $R=\operatorname{reg}(E)$, we set
\[
\partial E := \partial_{\operatorname{reg}(E)} E,
\]
and call it the \emph{boundary path space} of $E$.
Equivalently,
\[
\partial E
= E^\infty \;\sqcup\;
\bigl\{ u \in E^* \mid r(u) \in \operatorname{sing}(E) \bigr\}.
\]
Let $u=e_1\cdots e_n\in E^n$ and fix $0\le m\le n$.
We denote by $u(0,m)$ the initial subpath of $u$ of length $m$, namely
$u(0,m)=e_1\cdots e_m$ for $1\le m\le n$, and $u(0,0)=s(u)$.
Similarly, for an infinite path $x=e_1e_2\cdots\in E^\infty$ and $m\ge0$, we define
$x(0,m)=e_1\cdots e_m$ for $m\ge1$, and $x(0,0)=s(x)$.

If $u=e_1\cdots e_n\in E^n$ and $x=e_1e_2\cdots\in E^\infty$ satisfy
$r(u)=s(x)$, then we write $ux$ for the infinite path obtained by concatenating
$u$ and $x$.

For $u\in E^*$ and $x\in E^{\le\infty}$, we write $u\le x$ if
$|u|\le |x|$ and $x(0,|u|)=u$.
Also, we write $u<x$ if $u\le x$ and $u\neq x$.
For $u\in E^*$, we define the \emph{cylinder set} by
\[
Z(u):=\{x\in E^{\le\infty} : u\le x\}.
\]
Let $\mathcal{F}(E^*)$ be the family of all finite subsets of $E^*$.
For $u\in E^*$ and $F\in \mathcal{F}(E^*)$, we set
\[
Z_F(u):=Z(u)\setminus \bigcup_{\substack{u'\in F\\ u\le u'}} Z(u').
\]
We endow the path space $E^{\le\infty}$ with the topology generated by
\[
\{Z(u) : u\in E^*\}\,\cup\,\{E^{\le\infty}\setminus Z(u) : u\in E^*\}.
\]
By \cite[Proposition~2.2]{CL2016}, with this topology,
$E^{\le\infty}$ is a totally disconnected locally compact Hausdorff
space. Moreover, the collection
\[
\{Z_F(u) : u\in E^*,\ F\in\mathcal{F}(E^*)\}
\]
forms a basis of compact open subsets for the topology of
$E^{\le\infty}$.

Let \(R\subseteq\operatorname{reg}(E)\). We equip the relative boundary path
space \(\partial_R E\) with the subspace topology inherited from
\(E^{\leq\infty}\). Then \(\partial_R E\) is also a totally disconnected
locally compact Hausdorff space by \cite[Corollary~2.3]{CL2016}.

We will also use the fact that if \(F\) is a subgraph of \(E\), then
\(\partial F\) is closed in \(\partial E\) (see \cite[Lemma~3.3]{BS}).

\subsubsection{Boundary and relative boundary path groupoids}
Let $E$ be a directed graph.
Following \cite{BS}, we begin by recalling the groupoid $G(E)$ associated to $E$,
whose unit space is the full path space
\[
G(E)^{(0)}=E^{\le\infty}.
\]
Throughout this paper, however, we adopt the convention that paths are
concatenated from left to right; accordingly, the roles of the source and range
maps are interchanged compared with the convention used in \cite{BS}.

We define
\[
E^* * E^* * G(E)^{(0)}
:=\{(\alpha,\beta,x)\in E^*\times E^*\times G(E)^{(0)}
: r(\alpha)=r(\beta)=s(x)\}.
\]
The groupoid associated to $E$ is defined as
\[
G(E):=\bigl(E^* * E^* * G(E)^{(0)}\bigr)\big/\!\sim,
\]
where $(\alpha,\beta,x)\sim(\alpha',\beta',x')$ if there exist
$\gamma,\gamma'\in E^*$ and $y\in G(E)^{(0)}$ such that
\[
x=\gamma y,\qquad x'=\gamma' y,\qquad
\alpha\gamma=\alpha'\gamma',\qquad
\beta\gamma=\beta'\gamma'.
\]
We denote the equivalence class of $(\alpha,\beta,x)$ by $[\alpha,\beta,x]$.
The inversion is given by
$
[\alpha,\beta,x]^{-1}=[\beta,\alpha,x].
$
For composition, suppose that $\beta x=\alpha' x'$.
It can be shown that there exist $y\in G(E)^{(0)}$ and
$\gamma,\gamma'\in E^*$ such that
\[
x=\gamma y,\qquad x'=\gamma' y,\qquad \beta\gamma=\alpha'\gamma'.
\]
In this case, the product is defined by
\[
[\alpha,\beta,x]\cdot[\alpha',\beta',x']
=[\alpha\gamma,\beta'\gamma',y].
\]
The range and source maps are given by
\[
r([\alpha,\beta,x])=[r(\alpha),r(\alpha),\alpha x],
\qquad
s([\alpha,\beta,x])=[r(\beta),r(\beta),\beta x].
\]
Identifying $x\in G(E)^{(0)}$ with $[r(x),r(x),x]\in G(E)$, we may 
write
\[
r([\alpha,\beta,x])=\alpha x,
\qquad
s([\alpha,\beta,x])=\beta x.
\]

For $(\alpha,\beta)\in E^*\times E^*$ and a subset
$F\subseteq Z(r(\alpha))$, 
we define
\[
[\alpha,\beta,F]:=\{[\alpha,\beta,x]:x\in F\}.
\]
If $F$ is compact open in $G(E)^{(0)}$, then $[\alpha,\beta,F]$ is a compact open
bisection.
Such sets form a basis for a topology on $G(E)$, with respect to which $G(E)$
is an ample Hausdorff groupoid.

Let \(R\subseteq \operatorname{reg}(E)\). Since
\[
\partial_R E
=
G(E)^{(0)}\setminus E^*R,
\qquad
E^*R
:=
\{u\in E^*:r(u)\in R\},
\]
the relative boundary path space \(\partial_R E\) is a closed invariant
subset of \(G(E)^{(0)}\). We therefore define the \emph{relative boundary
path groupoid} of \((E,R)\) by
\[
G(E,R)
:=
G(E)_{\partial_R E}.
\]
It is an ample Hausdorff groupoid with unit space
$G(E,R)^{(0)}
=
\partial_R E$. 
In the special case \(R=\operatorname{reg}(E)\), we write
\[
\CG(E)
:=
G(E,\operatorname{reg}(E))
\]
and call it the \emph{boundary path groupoid} of \(E\).

There is a canonical continuous cocycle
\[
c_E:G(E)\longrightarrow\mathbb Z,
\qquad
c_E([\alpha,\beta,x])
=
|\alpha|-|\beta|.
\]
Since \(\partial_R E\) is invariant, the restriction of \(c_E\) to
\(G(E,R)\) is again a continuous \(\mathbb Z\)-valued cocycle and induces
the gauge action on \(C^*(G(E,R))\). We use the same notation \(c_E\) for
this restricted cocycle and, in particular, for its restriction to
\(\CG(E)\).

Finally, we notice that \(C^*(G(E,R))\cong C^*(E,R)\) in the category of of $\mathbb T$-$C^*$-algebras.

\subsubsection{Example I: Admissible Decompositions of Graphs}

We first discuss admissible decompositions of directed graphs and the
corresponding decompositions of boundary path spaces. Although the
directed graph case could be obtained as a consequence of the relative graph
or topological graph cases, we treat it separately here.  Directed graphs
provide the most familiar setting, and the role of admissible decompositions
is particularly transparent in this context.  This also allows us to formulate
the boundary-path decomposition directly, without passing through a more
general framework.

We  give an adapted formulation of admissible decompositions of graphs.
The terminology comes from \cite[Definition~2.1]{HRT}, but the conditions are
presented in a form that parallels the relative graph and topological graph
settings considered later.  In particular, condition {\rm (d)} should be
viewed as the directed graph analogue of the corresponding condition in those
more general settings.
In this form, admissibility is precisely the graph-theoretic counterpart of the equivalent conditions appearing in the pullback theorem below.

\begin{dfn}A pair $\{F_1, F_2\}$ of subgraphs of $E$ is called an {\it admissible decomposition} (cf. \cite[Definition~2.1]{HRT}) of a graph $E$ if
\begin{enumerate}
\item[(a)] $E = F_1 \cup F_2$,
\item[(b)] $\operatorname{sink}(F_1 \cap F_2) \subseteq \operatorname{sing}(F_1) \cap \operatorname{sing}(F_2)$,
\item[(c)] $F_1^1 \cap F_2^1 = r_{F_1}^{-1}(F_1^0 \cap F_2^0) = r_{F_2}^{-1}(F_1^0 \cap F_2^0)$,
\item[(d)] $\operatorname{sing}(F_1) \cap 
\operatorname{reg}(F_1\cap F_2)
\subseteq
\operatorname{reg}(F_2).$
\end{enumerate}
\end{dfn}

Let $E$ be a directed graph.  A subset \(H\subseteq E^0\) is called \emph{hereditary} if
$$
s(e)\in H \implies r(e)\in H
$$
for every \(e\in E^1\). It is called \emph{saturated} if whenever
\(v\in \operatorname{reg}(E) \) and every edge \(e\in E^1\) with \(s(e)=v\)
satisfies
$
r(e)\in H,
$
then \(v\in H\).

\begin{remark} \label{properties of admissible decomposition}
Let $\{F_1,F_2\}$ be an admissible decomposition of a graph $E$.
For $\{i,j\}=\{1,2\}$, put
\[
H_i
:=
F_i^0\setminus(F_1^0\cap F_2^0)
=
F_i^0\setminus F_j^0
=
E^0\setminus F_j^0.
\]
Then the following observations hold.
\begin{enumerate}
\item[(i)]
Put $F_0:=F_1^0\cap F_2^0$. 
By \cite[Lemma~2.2]{HRT} and \cite[Example~7.3]{BS},
condition {\rm (c)} is equivalent to saying that $H_i$ is hereditary
in $F_i$, and hence in $E$, for $i=1,2$, and that
\[
F_1^1\cap F_2^1=F_0E^1F_0,
\]
where $F_0E^1F_0
:=
\{e\in E^1:s_E(e),r_E(e)\in F_0\}$. In this case,
\begin{align*}
F_1 &= E/H_2, \\
F_2 &= E/H_1, \\
F_1\cap F_2 &= F_i/H_i
\qquad (i=1,2).
\end{align*}

\item[(ii)]
By \cite[Lemma~2.3]{HRT} and \cite[Example~7.3]{BS},
condition {\rm (b)} is equivalent to $H_i$ being saturated in both
$F_i$ and $E$ for $i=1,2$.

\item[(iii)]
By \cite[Example~7.2]{BS}, condition {\rm (d)} admits the following
equivalent formulations:
\[
\begin{aligned}
&\operatorname{sing}(F_1)\cap
\operatorname{reg}(F_1\cap F_2)
\subseteq
\operatorname{reg}(F_2)
\\
&\quad\Longleftrightarrow\quad
\operatorname{sing}(F_2)\cap
\operatorname{reg}(F_1\cap F_2)
\subseteq
\operatorname{reg}(F_1)
\\
&\quad\Longleftrightarrow\quad
\operatorname{sing}(F_1)\cap
\operatorname{sing}(F_2)\cap
\operatorname{reg}(F_1\cap F_2)
=
\emptyset.
\end{aligned}
\]
In particular, condition {\rm (d)} is symmetric in $F_1$ and $F_2$.
\end{enumerate}
\end{remark}

We now show that the diagram~\eqref{diag:Cstar-decomp} 
is a pullback by applying Theorem~\ref{thm:disjoint open invariant sets}. 
To this end, we first prove the following result 
concerning the boundary paths of \(E\), 
which follows from the structural properties described in the preceding remark.

\begin{prop}\label{decomposition of the boundary path space:graph}
Let \(E = (E^0, E^1, r, s)\) be a directed graph, and let \(\{F_1, F_2\}\) be an
admissible decomposition of \(E\). Then we have 
\begin{enumerate}
\item  $ \partial E (=\partial (F_1 \cup  F_2))=\partial F_1 \cup \partial F_2 $.
\item $\partial (F_1 \cap F_2) =\partial F_1 \cap \partial F_2 $.
\end{enumerate}
\end{prop}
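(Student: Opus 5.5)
The plan is to argue directly at the level of paths, using the structural description of an admissible decomposition in Remark~\ref{properties of admissible decomposition}. Put $F_0:=F_1^0\cap F_2^0$ and $H_i:=E^0\setminus F_j^0$ as in the remark. By (i), each $H_i$ is hereditary in $E$, $F_1=E/H_2$, $F_2=E/H_1$, and $F_1^1\cap F_2^1=F_0E^1F_0$. The first step is a routing lemma for a path $x\in E^{\le\infty}$. Since $E^0=H_1\sqcup F_0\sqcup H_2$ and the $H_i$ are hereditary, the vertices of $x$ start in $F_0$ and may later enter exactly one of $H_1$ or $H_2$, where they then stay. Alternatively, $x$ may start already inside some $H_i$. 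Every edge of $x$ whose range is not in $H_2$ is an edge of $F_1=E/H_2$, and symmetrically for $F_2$. Hence every path in $E$ is a path in $F_1$ or in $F_2$. A path lying in both $F_1$ and $F_2$ has all its vertices in $F_0$, so by (c) all its edges lie in $F_1^1\cap F_2^1$; it is therefore a path in $F_1\cap F_2$. This settles both items for infinite paths, so it remains to treat finite paths $u$, where the issue is only whether $v:=r(u)$ is singular in the relevant graph.

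For (1), inclusion $\subseteq$: let $u\in\partial E$ with $v$ singular in $E$. If $v\in H_i$, all $E$-edges from $v$ end in $H_i$. They are therefore edges of $F_i$, so $s_{F_i}^{-1}(v)=s_E^{-1}(v)$. Moreover $u$ lies in $F_i$, so $u\in\partial F_i$. If $v\in F_0$, heredity forces all vertices of $u$ into $F_0$, and by (c) $u$ is a path in $F_1\cap F_2$, hence in both $F_i$. Since $s_E^{-1}(v)=s_{F_1}^{-1}(v)\cup s_{F_2}^{-1}(v)$, there are two cases. A sink in $E$ is a sink in both $F_i$. An infinite emitter in $E$ is an infinite emitter in at least one $F_i$, and we take that $i$.

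For (1), inclusion $\supseteq$: let $u\in\partial F_i$ with $v$ singular in $F_i$. If $v\in H_i$, the same edge count as above gives that $v$ is singular in $E$. If $v\in F_0$ and $v$ is an infinite emitter in $F_i$, then it is an infinite emitter in $E$. If $v$ is a sink in $F_i$, it is a sink in $F_1\cap F_2$, so condition (b) makes $v$ singular in $F_j$. Then $v$ is either an infinite emitter in $F_j$, hence in $E$, or a sink in $F_j$, hence a sink in $E$.

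For (2), by the routing step the finite paths of $F_1\cap F_2$ are exactly the finite paths common to $F_1$ and $F_2$, with range $v\in F_0$. It therefore suffices to show that $v$ is singular in $F_1\cap F_2$ iff it is singular in both $F_1$ and $F_2$. For the forward direction: a sink of $F_1\cap F_2$ is singular in both $F_i$ by (b), and an infinite emitter of $F_1\cap F_2$ is an infinite emitter in both. For the converse: if $v$ is singular in $F_1$ and in $F_2$ but regular in $F_1\cap F_2$, this contradicts the third formulation of (d) in Remark~\ref{properties of admissible decomposition}(iii).

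The main obstacle is the bookkeeping in the routing lemma. One has to pin down exactly which edges belong to $F_1$, to $F_2$, or to both via the quotient description $F_i=E/H_j$. In particular, one must check carefully that edges from $F_0$ into $H_i$ lie in $F_i$ only, and that edges out of $H_i$ stay in $F_i$. Everything else is the case analysis on sinks versus infinite emitters driven by (b) and (d).
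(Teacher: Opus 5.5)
Your proposal is correct and follows essentially the same route as the paper: heredity routes each path into one of three cases (ending in $H_1$, ending in $H_2$, or staying in $F_0$); the identity $s_E^{-1}(v)=s_{F_1}^{-1}(v)\cup s_{F_2}^{-1}(v)$ handles the sink versus infinite-emitter split; and condition (b) together with the symmetric form of (d) from Remark~\ref{properties of admissible decomposition}(iii) settles part (2). The only minor difference is the inclusion $\partial F_i\subseteq\partial E$, where you check singularity in $E$ directly from (b) and heredity, whereas the paper derives $\operatorname{reg}(E)\cap F_i^0\subseteq\operatorname{reg}(F_i)$ from saturation of $H_j$ and then cites an external inclusion result for boundary path spaces; your version is a self-contained proof of the same fact.
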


\begin{proof}
\noindent\text{(1) ($\subseteq$):}
We begin with the following observation: let
$
x=x_1x_2\cdots
$
be a path in $E$. Suppose that there exists $n \ge 0$ such that 
$r(x_n) \in F_1^0 \setminus F_2^0$, and that $n$ is minimal with this property.
Then every edge of $x$ belongs to $F_1^1$;
indeed, since $F_1^0 \setminus F_2^0$ is hereditary, we have 
$r(x_k) \in F_1^0 \setminus F_2^0$ for all $k \ge n$, and hence 
$x_k \in F_1^1$ for all $k \ge n$.
For $k < n$, minimality of $n$ implies that 
$r(x_k) \in F_1^0 \cap F_2^0$, and hence by admissible condition~(c), 
$x_k \in F_1^1 \cap F_2^1 \subseteq F_1^1$.
Thus every edge of $x$ belongs to $F_1^1$.
 
By symmetry, if $x$ eventually enters $F_2^0 \setminus F_1^0$, then every edge of $x$ belongs to $F_2^1$.

Now, take any boundary path $x \in \partial E$. We consider three possibilities
according to the vertices visited by $x$.

\medskip
\noindent \text{Case 1: $x$ eventually enters a vertex in $F_1^0 \setminus F_2^0$.}

By the observation above, every edge of $x$ belongs to $F_1^1$, and hence $x$ is a path in $F_1$.
If $x$ is infinite, then  $x \in \partial F_1$.
If $x$ is finite and terminates at a vertex $v \in \operatorname{sing}(E)$, 
then $v$ is also singular in $F_1$, and hence $x \in \partial F_1$.

\medskip
\noindent\text{Case 2: $x$ eventually enters a vertex in $F_2^0 \setminus F_1^0$.}

By the same argument as in Case~1, with the roles of $F_1$ and $F_2$ interchanged,
we conclude that $x \in \partial F_2$.

\medskip
\noindent\text{Case 3: $x$ never enters $F_1^0 \setminus F_2^0$ nor $F_2^0 \setminus F_1^0$.}

In this case, for every edge $x_n$ of $x$, we have
\[
r(x_n) \in F_1^0 \cap F_2^0.
\]
By admissible condition~(c) $
F_1^1 \cap F_2^1
=
r_{F_1}^{-1}(F_1^0 \cap F_2^0)
=
r_{F_2}^{-1}(F_1^0 \cap F_2^0),
$
 every edge of $x$ belongs to $F_1^1 \cap F_2^1$.
Therefore, $x$ is a path in both $F_1$ and $F_2$. Now, if $x$ is infinite, then $x$ is clearly an infinite path in both $F_1$ and $F_2$,
and hence $x \in \partial F_1 \cap \partial F_2$.

Suppose next that $x$ is finite and terminates at a vertex $v \in \operatorname{sing}(E)$. If $v$ is a sink in $E$, then $v$ is a sink both in $F_1$ and $F_2$, and hence $x\in \partial F_1 \cap \partial F_2\subseteq \partial F_1 \cup \partial F_2$. If $v$ is an infinite emitter in $E$, because $E=F_1\cup F_2$, $v$ is an infinite emitter in $F_1$ or in $F_2$, from where it follows that $x\in \partial F_1 \cup \partial F_2$.

\medskip
Combining the three cases above, we obtain
$$
\partial E \subseteq \partial F_1 \cup \partial F_2.
$$

\noindent\text{ ($\supseteq$):} Since $H_2 = E^0 \setminus F_1^0$ is saturated, we have
\[
\operatorname{reg}(E) \cap F_1^0 \subseteq \operatorname{reg}(F_1)
\]
(see, for example, \cite[Example 7.2]{BS}). Similarly, we have
\[
\operatorname{reg}(E) \cap F_2^0 \subseteq \operatorname{reg}(F_2).
\]
It follows from \cite[Theorem 3.4]{BS} that
\[
\partial F_1 \subseteq \partial E
\quad \text{and} \quad
\partial F_2 \subseteq \partial E.
\]
Therefore,
\[
\partial F_1 \cup \partial F_2 \subseteq \partial E.
\]

\noindent\text{(2) ($\subseteq$):} Let $x \in \partial(F_1 \cap F_2)$.  
 If $x$ is an infinite path, then $x \in F_1^\infty \cap F_2^\infty$, so $x \in \partial F_1 \cap \partial F_2$. 
Suppose that $x$ is a finite path ending at $v := r(x) \in \operatorname{sing}(F_1\cap F_2)$. 
By admissible condition~(b), every sink of $F_1 \cap F_2$ is singular in each $F_i$. 
Moreover, every infinite emitter of $F_1 \cap F_2$ is an infinite emitter in each $F_i$. 
Thus, $v$ is singular in both $F_1$ and $F_2$, and hence $x \in \partial F_1 \cap \partial F_2$.

Therefore, we have $$
\partial(F_1 \cap F_2) \subseteq \partial F_1 \cap \partial F_2 .
$$

 \noindent \text{($\supseteq$):}
Let $x\in \partial F_1\cap \partial F_2.$
Since $x$ is a path in both $F_1$ and $F_2$, every edge of $x$ belongs to
$F_1^1\cap F_2^1=(F_1\cap F_2)^1.$
Hence, $x$ is a path in $F_1\cap F_2$.
If $x$ is infinite, then
\[
x\in (F_1\cap F_2)^\infty\subseteq \partial(F_1\cap F_2).
\]
Suppose now that $x$ is finite, and put $v=r(x).$
Since $x\in \partial F_1\cap \partial F_2,$
we have
\[
v\in \operatorname{sing}(F_1)\cap \operatorname{sing}(F_2).
\]
 By Remark~\ref{properties of admissible decomposition}(iii),
\[
\operatorname{sing}(F_1)\cap \operatorname{sing}(F_2)
\cap \operatorname{reg}(F_1\cap F_2)
=
\emptyset,\]
which implies that $v\in \operatorname{sing}(F_1\cap F_2)$. Thus, $x\in \partial(F_1\cap F_2).$ 

Consequently,
$\partial F_1\cap \partial F_2
\subseteq
\partial(F_1\cap F_2).
$

\end{proof}

The following result recovers the pullback theorem for admissible
decompositions of graphs from \cite[Theorem~2.7]{HRT} as an application of
Theorem~\ref{thm:disjoint open invariant sets}.

\begin{thm}[Pullbacks from admissible decompositions of graphs]
\label{ex 1:graph pullback theorem}
Let \(E=(E^0,E^1,r,s)\) be a directed graph, and let
\(\{F_1,F_2\}\) be an admissible decomposition of \(E\). Then the diagram
\begin{equation}\label{diag:Cstar-decomp}
\begin{aligned}
\begin{tikzpicture}
  \node (P) at (0,2) {$C^*(E)$};
  \node (A) at (-2,0) {$C^*(F_1)$};
  \node (B) at (2,0) {$C^*(F_2)$};
  \node (C) at (0,-2) {$C^*(F_1\cap F_2)$};

  \draw[->] (P) -- (A);
  \draw[->] (P) -- (B);
  \draw[->] (A) -- (C);
  \draw[->] (B) -- (C);
\end{tikzpicture}
\end{aligned}
\end{equation}
is a pullback diagram in the category of \(\mathbb T\)-\(C^*\)-algebras
and \(\mathbb T\)-equivariant \( * \)\nobreakdash-ho\-mo\-mor\-phisms, where each graph
\(C^*\)-algebra is equipped with its canonical gauge action.
\end{thm}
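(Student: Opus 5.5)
The plan is to realize everything inside the single groupoid $\CG(E)$ and then apply Theorem~\ref{thm:disjoint open invariant sets}. Set $X_i:=\partial F_i\subseteq \partial E$ for $i=1,2$, and put $U_i:=\partial E\setminus \partial F_i$. The hypotheses to verify are the following.
\begin{enumerate}
\item[(i)] Each $X_i$ is closed and $\CG(E)$-invariant, so that $U_i$ is open and invariant.
\item[(ii)] $U_1\cap U_2=\emptyset$.
\item[(iii)] The reductions $\CG(E)_{X_i}$ and $\CG(E)_{X_1\cap X_2}$ are identified, equivariantly for the cocycles, with $\CG(F_i)$ and $\CG(F_1\cap F_2)$.
\end{enumerate}
Given these, the theorem yields a gauge-equivariant pullback of groupoid $C^*$-algebras. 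Transporting it along the gauge-equivariant isomorphisms $C^*(\CG(F))\cong C^*(F)$ gives diagram~\eqref{diag:Cstar-decomp}. One must also check that the restriction maps correspond to the canonical quotient maps $C^*(E)\to C^*(E/H_j)=C^*(F_i)$, which is clear on generators: $p_v$ and $s_e$ map to $1_{Z(v)}$ and $1_{[e,r(e),Z(r(e))]}$, and these restrict to the corresponding generators or to $0$.

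Closedness in (i) comes from \cite[Lemma~3.3]{BS}, since $F_i$ is a subgraph of $E$. Disjointness in (ii) is exactly Proposition~\ref{decomposition of the boundary path space:graph}(1): $U_1\cap U_2=\partial E\setminus(\partial F_1\cup\partial F_2)=\emptyset$.

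For invariance in (i), I would take $[\alpha,\beta,y]\in\CG(E)$ with $\beta y\in\partial F_1$ and show that $\alpha y\in\partial F_1$. The tail $y$ is then a boundary path of $F_1$, or a vertex-length path at a vertex that is singular in $F_1$ when $y$ is trivial. It remains to see that $\alpha$ is a path in $F_1$. If some edge of $\alpha$ lay outside $F_1^1$, that edge would lie in $F_2^1\setminus F_1^1$. By condition~(c), its range would then lie in $H_2=F_2^0\setminus F_1^0$, and heredity of $H_2$ would force $r(\alpha)=s(y)\in H_2$. This contradicts $s(y)\in F_1^0$. The observation used in the proof of Proposition~\ref{decomposition of the boundary path space:graph}(1) is this same argument. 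Hence $\alpha y$ is a path in $F_1$ with the same tail as $\beta y$, so it lies in $\partial F_1$. The case of $F_2$ is symmetric.

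Step (iii) is the main obstacle: showing $\CG(E)_{\partial F_i}\cong\CG(F_i)$ as topological groupoids compatibly with $c$. I would define the map $\CG(F_i)\to\CG(E)$ by $[\alpha,\beta,x]\mapsto[\alpha,\beta,x]$. Its image is $\CG(E)_{\partial F_i}$, by the invariance argument above applied to both $\alpha$ and $\beta$. Injectivity follows because the equivalence relation is defined by common tails and is intrinsic. The map preserves $|\alpha|-|\beta|$.

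For the homeomorphism, I would compare the basic bisections $[\alpha,\beta,Z_F(r(\alpha))\cap\partial F_i]$. The cylinder topology on $\partial F_i$ coincides with the subspace topology from $\partial E$, since $Z_E(u)\cap\partial F_i=Z_{F_i}(u)$ for $u\in F_i^*$ and is empty otherwise. The only subtlety is that a complement of $Z(u)$ for a path $u$ not in $F_i$ is trivially all of $\partial F_i$. For $X_1\cap X_2$, Proposition~\ref{decomposition of the boundary path space:graph}(2) gives $\partial F_1\cap\partial F_2=\partial(F_1\cap F_2)$, and the same identification applies with $F_1\cap F_2$ in place of $F_i$. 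Under these identifications the maps $\chi_i$ are the corresponding restrictions.
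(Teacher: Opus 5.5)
Your proposal is correct and follows essentially the same route as the paper: you realize $\partial F_1,\partial F_2$ as closed invariant subsets of $\partial E$ with disjoint complements, using Proposition~\ref{decomposition of the boundary path space:graph}(1), and use part (2) for the intersection; you then apply Theorem~\ref{thm:disjoint open invariant sets} with $c_E$ and transport along the gauge-equivariant isomorphisms $C^*(\mathcal{G}(F))\cong C^*(F)$. The paper only asserts the invariance of $\partial F_i$ and the identifications $\mathcal{G}(E)_{\partial F_i}=\mathcal{G}(F_i)$ and $\mathcal{G}(E)_{\partial(F_1\cap F_2)}=\mathcal{G}(F_1\cap F_2)$, so your heredity argument via condition~(c), your topology comparison and your check on generators fill in details the paper leaves implicit.
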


\begin{proof}
Consider the boundary path groupoid \(\mathcal{G}(E)\) of \(E\) with unit
space \(\partial E\). By Proposition~\ref{decomposition of the boundary path space:graph}(1), we have
$
\partial E=\partial F_1\cup \partial F_2.
$
Since \(\partial F_1\) and \(\partial F_2\) are closed
\(\mathcal{G}(E)\)-invariant subsets of \(\partial E\), the sets
\[
U_1:=\partial E\setminus \partial F_1,
\qquad
U_2:=\partial E\setminus \partial F_2
\]
are open \(\mathcal{G}(E)\)-invariant subsets of \(\partial E\). Moreover,
the equality \(\partial E=\partial F_1\cup \partial F_2\) implies
$U_1\cap U_2=\emptyset$. Also, we have 
$
\mathcal{G}(E)
=
\mathcal{G}(E)_{\partial F_1}
\cup
\mathcal{G}(E)_{\partial F_2}$.
Moreover, by Proposition~\ref{decomposition of the boundary path space:graph}(2), we see that 
\[
\mathcal{G}(E)_{\partial F_1}
\cap
\mathcal{G}(E)_{\partial F_2}
=
\mathcal{G}(E)_{\partial F_1\cap \partial F_2}
=
\mathcal{G}(E)_{\partial(F_1\cap F_2)}.
\]

Let
$
c_E:\mathcal{G}(E)\to \mathbb Z
$
be the canonical cocycle, given by
\[
c_E([\alpha,\beta,x])=|\alpha|-|\beta|.
\]
Its restrictions to
\(\mathcal{G}(E)_{\partial F_i}\), for \(i=1,2\), and to
\(\mathcal{G}(E)_{\partial(F_1\cap F_2)}\) are precisely the canonical
cocycles on the corresponding graph groupoids.  Therefore, by
Theorem~\ref{thm:disjoint open invariant sets}, we obtain the following pullback diagram
in the category of \(\mathbb T\)-\(C^*\)-algebras and
\(\mathbb T\)-equivariant \( * \)-homomorphisms:
\[
\begin{tikzpicture}
  \node (P) at (0,2) {$C^*(\mathcal{G}(E))$};
  \node (A) at (-2,0) {$C^*(\mathcal{G}(E)_{\partial F_1})$};
  \node (B) at (2,0) {$C^*(\mathcal{G}(E)_{\partial F_2})$};
  \node (C) at (0,-2) {$C^*(\mathcal{G}(E)_{\partial(F_1\cap F_2)})$.};

  \draw[->] (P) -- (A);
  \draw[->] (P) -- (B);
  \draw[->] (A) -- (C);
  \draw[->] (B) -- (C);
\end{tikzpicture}
\]
Here all gauge actions are those induced by the canonical cocycle \(c_E\)
and its restrictions.

Now observe that
\[
\mathcal{G}(E)_{\partial F_i}
=
\mathcal{G}(F_i)
\quad \text{for } i=1,2,
\qquad
\mathcal{G}(E)_{\partial(F_1\cap F_2)}
=
\mathcal{G}(F_1\cap F_2).
\]
Under these identifications, the restricted cocycle \(c_E\) agrees with the
canonical cocycle on each graph groupoid. Hence the standard isomorphisms
\[
C^*(\mathcal{G}(E))\cong C^*(E),
\qquad
C^*(\mathcal{G}(E)_{\partial F_i})\cong C^*(F_i)
\quad \text{for } i=1,2,
\]
and
\[
C^*(\mathcal{G}(E)_{\partial(F_1\cap F_2)})
\cong
C^*(F_1\cap F_2)
\]
are \(\mathbb T\)-equivariant with respect to the gauge actions.

Therefore, after applying these \(\mathbb T\)-equivariant isomorphisms to the
above diagram, we obtain the diagram
\[
\begin{tikzpicture}
  \node (P) at (0,2) {$C^*(E)$};
  \node (A) at (-2,0) {$C^*(F_1)$};
  \node (B) at (2,0) {$C^*(F_2)$};
  \node (C) at (0,-2) {$C^*(F_1\cap F_2)$};

  \draw[->] (P) -- (A);
  \draw[->] (P) -- (B);
  \draw[->] (A) -- (C);
  \draw[->] (B) -- (C);
\end{tikzpicture}
\]
as a pullback diagram.  This pullback is taken in the category of
\(\mathbb T\)-\(C^*\)-algebras and \(\mathbb T\)-equivariant
$*$-homomorphisms.
\end{proof}

\begin{remark}
By the relative graph pullback characterization \cite[Theorem 5.9]{BS}, the converse also holds
after viewing ordinary directed graphs as a special case of relative graphs.
Hence, in the directed graph setting, the admissibility conditions above are
equivalent to the assertion that the diagram in
Theorem~\ref{ex 1:graph pullback theorem} is a pullback diagram.  \end{remark}

\subsubsection{Example II: Admissible Pairs of Relative Graphs}

We now focus on relative graphs and explain how the pushout construction in this category yields a pullback diagram of 
$C^*$-algebra via the associated relative boundary path groupoids. This gives another class of examples of Theorem~\ref{thm:disjoint open invariant sets}, extending the result for relative graphs in \cite[Theorem 5.9]{BS}.

A \emph{morphism} (\cite[Definition 4.1]{BS}) of relative graphs
\[
\alpha:(F,B)\longrightarrow (E,A)
\]
is an injective graph homomorphism $\alpha:F\hookrightarrow E$, viewed as an inclusion, satisfying:
\begin{enumerate}
\item $E^0\setminus F^0$ is hereditary in $E$;
\item $F^1=F^0E^1F^0$;
\item $A\cap F^0\subseteq B$.
\end{enumerate}

We note that if $\alpha:(F,B)\longrightarrow (E,A)$ is a morphism, then $$G(F,B)^{(0)} \subseteq G(E,A)^{(0)}$$ by \cite[Theorem 2.4]{BS}. 

Let
\[
(F_0,A_0) \xrightarrow{\;\alpha_1\;} (F_1,A_1), 
\qquad
(F_0,A_0) \xrightarrow{\;\alpha_2\;} (F_2,A_2)
\]
be morphisms of relative graphs, and let $(E,A)$ be the pushout of this diagram
in the category of relative graphs given by
\begin{align*}
E &= F_1 \mathop{\sqcup}_{F_0} F_2, \\
A &= (A_1 \setminus F_0^0) \cup (A_2 \setminus F_0^0) \cup (A_1 \cap A_2),
\end{align*}
where the graph $E$ is defined by
\begin{align*}
E^0 &= (F_1^0 \sqcup F_2^0)/\{\alpha_1(v)=\alpha_2(v) : v\in F_0^0\}, \\
E^1 &= (F_1^1 \sqcup F_2^1)/\{\alpha_1(e)=\alpha_2(e) : e\in F_0^1\}
\end{align*}
with $r_E(e)=r_{F_i}(e)$ and $s_E(e)=s_{F_i}(e)$ for $e\in F_i^1$ (see \cite[Theorem 4.3]{BS}).

The pair $(\alpha_1, \alpha_2)$ is called {\it admissible} (\cite[Definition 5.7]{BS}) if 
$$A_0 \subseteq A_1 \cup A_2.$$
Note that $(\alpha_1, \alpha_2)$ is admissible if and only if $A_0=A_{12}:=(A_1 \cup A_2) \cap F_0^0.$

\begin{prop}\label{boundary path decomposition:relative graph}
Let $\alpha_i:(F_0,A_0)\to(F_i,A_i)$, $i=1,2$, be morphisms of relative graphs
and let $(E,A)$ be their pushout. If $(\alpha_1,\alpha_2)$ is admissible, then
we have the following:
\begin{enumerate}
\item $G(F_1,A_1)^{(0)} \cup G(F_2,A_2)^{(0)} = G(E,A)^{(0)}$,
\item  $G(F_1,A_1)^{(0)} \cap G(F_2,A_2)^{(0)}= G(F_0,A_0)^{(0)}$.
\end{enumerate}
\end{prop}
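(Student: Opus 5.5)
The plan is to identify all four unit spaces with subsets of the path space $E^{\le\infty}$ and compare them directly, as in the proof of Proposition~\ref{decomposition of the boundary path space:graph}. Concretely, $G(F_i,A_i)^{(0)}=\partial_{A_i}F_i$ for $i=0,1,2$ and $G(E,A)^{(0)}=\partial_A E$, viewed inside $E^{\le\infty}$ via the inclusions $\alpha_i$, where $F_1$ and $F_2$ are regarded as subgraphs of the pushout $E$ with $F_1\cap F_2=F_0$.

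First I will record the structural facts coming from the morphism axioms. Put $H_1:=F_1^0\setminus F_0^0$ and $H_2:=F_2^0\setminus F_0^0$, so that $E^0=H_1\sqcup F_0^0\sqcup H_2$.
\begin{enumerate}
\item[(i)] Each $H_i$ is hereditary in $E$. An edge of $E$ with source in $H_1$ must lie in $F_1^1$, since edges of $F_2$ have sources in $F_2^0$. Heredity of $H_1$ in $F_1$ then sends its range into $H_1$.
\item[(ii)] Any edge $e\in F_i^1$ with $r(e)\in F_0^0$ has $s(e)\in F_0^0$ by (i). The condition $F_0^1=F_0^0F_i^1F_0^0$ then gives $e\in F_0^1$. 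Hence $F_1^1\cap F_2^1=F_0^1$ inside $E^1$.
\item[(iii)] From the formula for $A$ we get $A\cap H_i=A_i\cap H_i$ and $A\cap F_0^0=A_1\cap A_2\cap F_0^0$.
\end{enumerate}

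For (1), inclusion $\subseteq$: take $x\in\partial_A E$ and split into the same three cases as in the graph case. If $x$ eventually enters $H_1$, facts (i) and (ii) show, via the minimal-index argument, that every edge of $x$ lies in $F_1^1$. If $x$ is finite, its endpoint $v$ lies in $H_1$ and $v\notin A$, so $v\notin A_1$ by (iii); thus $x\in\partial_{A_1}F_1$. The case of $H_2$ is symmetric. If $x$ never enters $H_1\cup H_2$, then by (ii) $x$ is a path in $F_0$. If $x$ is finite with endpoint $v\in F_0^0\setminus A$, then $v\notin A_1\cap A_2$ by (iii), so $x$ lies in $\partial_{A_1}F_1$ or in $\partial_{A_2}F_2$. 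Paths of length zero are covered by the same reasoning applied to the vertex itself.

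For (1), inclusion $\supseteq$: a path in $F_i$ is a path in $E$. A finite such path ending at $v\notin A_i$ satisfies $v\notin A$ by (iii), because either $v\in H_i$ or $v\in F_0^0$ with $v\notin A_1\cap A_2$. Admissibility is not needed for part (1).

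For (2), inclusion $\subseteq$: if $x\in\partial_{A_1}F_1\cap\partial_{A_2}F_2$, then $x$ is a path in $F_0$ by (ii). If $x$ is finite, its endpoint $v\in F_0^0$ satisfies $v\notin A_1\cup A_2$, and admissibility $A_0\subseteq A_1\cup A_2$ gives $v\notin A_0$. For the inclusion $\supseteq$: a path in $F_0$ is a path in each $F_i$. If it is finite, its endpoint $v\notin A_0$, and the morphism condition $A_i\cap F_0^0\subseteq A_0$ gives $v\notin A_i$.

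The main obstacle is the bookkeeping in the pushout. One must verify carefully that the morphism axioms, stated separately in $F_1$ and $F_2$, give heredity of $H_i$ in all of $E$ and the identification $F_1^1\cap F_2^1=F_0^1$. Once (i)–(iii) are in place, everything else is a vertex-by-vertex check.
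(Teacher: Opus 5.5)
Your proof is correct and uses the same ingredients as the paper's: heredity of $F_i^0\setminus F_0^0$, the condition $F_0^1=F_0^0F_i^1F_0^0$, the identities $A\cap F_i^0\subseteq A_i$ and $A\cap F_0^0=A_1\cap A_2$ coming from the formula for $A$, and admissibility only in part (2), which agrees with the paper, whose argument for (1) also does not use it. The difference is organizational. The paper asserts $G(E)^{(0)}=G(F_1)^{(0)}\cup G(F_2)^{(0)}$ and $G(F_1)^{(0)}\cap G(F_2)^{(0)}=G(F_0)^{(0)}$ ``by the construction of the pushout'' and then does set algebra with $E^*A$. Your three-case analysis via (i)--(iii) instead establishes the union decomposition directly, and your fact (ii) supplies the intersection.
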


\begin{proof}
We first note that, for $i=1,2$, if $\mu\in F_i^*$ and
$r(\mu)\in F_0^0$, then $\mu\in F_0^*$. Indeed, since
$F_i^0\setminus F_0^0$ is hereditary in $F_i$, all vertices of $\mu$
belong to $F_0^0$, and then
\[
F_0^1=F_0^0F_i^1F_0^0
\]
implies that all edges of $\mu$ belong to $F_0^1$.

\medskip

\noindent
(1): By the construction of the pushout,
\[
G(E)^{(0)}
=
G(F_1)^{(0)}\cup G(F_2)^{(0)}.
\]
Also, for $i=1,2$,
\[
G(F_i)^{(0)}\setminus F_i^*A_i
=
G(F_i)^{(0)}\setminus E^*A_i,
\]
since the paths under consideration already belong to $F_i$.

We compute
\begin{align*}
G(F_1,A_1)^{(0)}\cup G(F_2,A_2)^{(0)}
&=
\bigl(G(F_1)^{(0)}\setminus F_1^*A_1\bigr)
\cup
\bigl(G(F_2)^{(0)}\setminus F_2^*A_2\bigr)\\
&=
\bigl(G(F_1)^{(0)}\setminus E^*A_1\bigr)
\cup
\bigl(G(F_2)^{(0)}\setminus E^*A_2\bigr)\\
&=
\bigl(G(F_1)^{(0)}\cup G(F_2)^{(0)}\bigr)\setminus E^*A\\
&=
G(E)^{(0)}\setminus E^*A\\
&=
G(E,A)^{(0)},
\end{align*}
where the third equality is justified by the following two inclusions. Since
$F_1^0\cap F_2^0=F_0^0$, the definition of $A$ gives
\[
A\cap F_1^0
=
(A_1\setminus F_0^0)\cup(A_1\cap A_2)
\subseteq A_1,
\]
and similarly,
\[
A\cap F_2^0\subseteq A_2.
\]

Now let
\[
x\in
\bigl(G(F_1)^{(0)}\setminus E^*A_1\bigr)
\cup
\bigl(G(F_2)^{(0)}\setminus E^*A_2\bigr).
\]
Suppose, for instance, that
$x\in G(F_1)^{(0)}\setminus E^*A_1$.
If $x$ is infinite, then $x\notin E^*A$. If $x$ is finite, then
$r(x)\in F_1^0$ and $r(x)\notin A_1$. Since
$A\cap F_1^0\subseteq A_1$, it follows that $r(x)\notin A$, and hence
$x\notin E^*A$. The same argument applies when
$x\in G(F_2)^{(0)}\setminus E^*A_2$. Thus
\[
\bigl(G(F_1)^{(0)}\setminus E^*A_1\bigr)
\cup
\bigl(G(F_2)^{(0)}\setminus E^*A_2\bigr)
\subseteq
\bigl(G(F_1)^{(0)}\cup G(F_2)^{(0)}\bigr)\setminus E^*A.
\]

For the reverse inclusion, let
\[
x\in
\bigl(G(F_1)^{(0)}\cup G(F_2)^{(0)}\bigr)\setminus E^*A.
\]
If $x$ is infinite, then $x\notin E^*A_1\cup E^*A_2$, so the conclusion
is immediate. Suppose that $x$ is finite and, without loss of
generality, $x\in G(F_1)^{(0)}$.

If $r(x)\notin A_1$, then
\[
x\in G(F_1)^{(0)}\setminus E^*A_1,
\]
and we are done. Otherwise, $r(x)\in A_1$. Since $r(x)\notin A$, while
\[
A_1\setminus F_0^0\subseteq A
\qquad\text{and}\qquad
A_1\cap A_2\subseteq A,
\]
we must have
\[
r(x)\in (A_1\cap F_0^0)\setminus A_2.
\]
By the observation above,
\[
x\in G(F_0)^{(0)}\subseteq G(F_2)^{(0)},
\]
and since $r(x)\notin A_2$,
\[
x\in G(F_2)^{(0)}\setminus E^*A_2.
\]
Thus the reverse inclusion follows, proving the third equality.

\medskip

(2) By the construction of the pushout,
\[
G(F_1)^{(0)}\cap G(F_2)^{(0)}
=
G(F_0)^{(0)}.
\]

We compute
\begin{align*}
G(F_1,A_1)^{(0)} \cap G(F_2,A_2)^{(0)}
&=
\bigl(G(F_1)^{(0)} \setminus F_1^*A_1\bigr)
\cap
\bigl(G(F_2)^{(0)} \setminus F_2^*A_2\bigr)\\
&=
\bigl(G(F_1)^{(0)} \cap G(F_2)^{(0)}\bigr)
\setminus E^*A_0\\
&=
G(F_0)^{(0)}\setminus E^*A_0\\
&=
G(F_0,A_0)^{(0)},
\end{align*}
where the second equality is justified as follows. Let
$
x\in G(F_0)^{(0)}.
$
We claim that $x\in \bigl(G(F_1)^{(0)} \setminus F_1^*A_1\bigr)
\cap
\bigl(G(F_2)^{(0)} \setminus F_2^*A_2\bigr)$ if and only if $\bigl(G(F_1)^{(0)} \cap G(F_2)^{(0)}\bigr)
\setminus E^*A_0$.
If $x$ is infinite, then the claim is immediate, since
$
x\notin F_1^*A_1,$ $x\notin F_2^*A_2$ and $
x\notin E^*A_0.$

Suppose that $x$ is finite. Then $r(x)\in F_0^0$, and since
$(\alpha_1,\alpha_2)$ is admissible,
\[
A_0=(A_1\cup A_2)\cap F_0^0.
\]
Hence, we see that
\begin{align*}
x\notin F_1^*A_1 \text{ and } x\notin F_2^*A_2
&\Longleftrightarrow
r(x)\notin A_1 \text{ and } r(x)\notin A_2\\
&\Longleftrightarrow
r(x)\notin A_1\cup A_2\\
&\Longleftrightarrow
r(x)\notin A_0\\
&\Longleftrightarrow
x\notin F_0^*A_0,
\end{align*}
Hence the claim also holds for $x$ finite.
This proves the second equality and we are done.
\end{proof}

Note that \cite[Theorem~5.9]{BS} gives an equivalence between admissibility of
the pair \((\alpha_1,\alpha_2)\) and the corresponding pullback property.
The following result gives an alternative proof of the
admissibility-implies-pullback direction from a groupoid point of view,
using the groupoid pullback Theorem~\ref{thm:disjoint open invariant sets}.

\begin{thm}[Pullbacks from admissible pairs of relative graphs]
\label{thm:relative-graph-pullback}
Let $\alpha_i: (F_0, A_0) \to (F_i, A_i)$, $i=1,2$, be morphisms of relative
graphs, and let $(E,A)$ be their pushout. If $(\alpha_1, \alpha_2)$ is
admissible, then the following diagram of $C^*$-algebras
\[
\begin{tikzpicture}
  \node (P) at (0,2) {$C^*(E,A)$};
  \node (A) at (-2,0) {$C^*(F_1,A_1)$};
  \node (B) at (2,0) {$C^*(F_2,A_2)$};
  \node (C) at (0,-2) {$C^*(F_0,A_0)$};

  \draw[->] (P) -- (A);
  \draw[->] (P) -- (B);
  \draw[->] (A) -- (C);
  \draw[->] (B) -- (C);
\end{tikzpicture}
\]
is  pullback in the category of \(\mathbb T\)-\(C^*\)-algebras and
\(\mathbb T\)-equivariant \( * \)-homomorphisms.
\end{thm}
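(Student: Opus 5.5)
We mirror the proof of Theorem~\ref{ex 1:graph pullback theorem}, with Proposition~\ref{boundary path decomposition:relative graph} in place of Proposition~\ref{decomposition of the boundary path space:graph}. The ambient groupoid is $\CG:=G(E,A)$, with unit space $\partial_A E=G(E,A)^{(0)}$ and the canonical cocycle $c_E([\alpha,\beta,x])=|\alpha|-|\beta|$. Put $X_i:=G(F_i,A_i)^{(0)}=\partial_{A_i}F_i$ for $i=1,2$. These sets lie in $\partial_A E$ by the remark following the definition of morphisms, since $\alpha_i:(F_i,A_i)\to(E,A)$ are the pushout morphisms. Set $U_i:=\partial_A E\setminus X_i$.

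The first step is to show that each $X_i$ is closed and $\CG$-invariant in $\partial_A E$. For closedness, note that $X_i=G(F_i)^{(0)}\setminus F_i^*A_i$. Here $G(F_i)^{(0)}=F_i^{\le\infty}$ is closed in $E^{\le\infty}$, by the same argument as \cite[Lemma~3.3]{BS}. The excised set $F_i^*A_i$ consists of finite paths ending in $A_i$. On the other hand, $E^*A\cap X_i=\emptyset$, because $A\cap F_i^0\subseteq A_i$. Hence $X_i=G(F_i)^{(0)}\cap\partial_A E$ after checking that finite paths of $F_i$ ending in $A_i\setminus A$ do not lie in $X_i$ anyway. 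I expect instead to write $X_i=\partial_A E\cap (F_i^{\le\infty}\setminus F_i^*A_i)$ and argue closedness in $\partial_A E$ via the basic sets $Z_F(u)$. For invariance, take $[\alpha,\beta,x]\in\CG$ with $\beta x\in X_i$. Then $x$ is a tail of a path in $F_i$, so $x\in X_i$. We must show $\alpha x\in X_i$. Since $r(\alpha)=s(x)\in F_i^0$, and $E^0\setminus F_i^0$ is hereditary in $E$ (it equals $F_j^0\setminus F_0^0$, hereditary in $F_j$, with edges of $E$ leaving it lying in $F_j$), every vertex along $\alpha$ lies in $F_i^0$. The edges of $\alpha$ then lie in $F_i$, because an edge of $F_j$ with range in $F_i^0$ has range in $F_0^0$, so its source is in $F_0^0$ and the edge is in $F_0^1\subseteq F_i^1$. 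The endpoint condition for finite $x$ is unchanged. This establishes invariance, and so $U_i$ is open and invariant.

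The second step uses Proposition~\ref{boundary path decomposition:relative graph}. Part (1) gives $X_1\cup X_2=\partial_A E$, hence $U_1\cap U_2=\emptyset$. Part (2) gives $X_1\cap X_2=G(F_0,A_0)^{(0)}$. Theorem~\ref{thm:disjoint open invariant sets} then yields a $\mathbb T$-equivariant pullback diagram with corners $C^*(\CG_{X_1})$, $C^*(\CG_{X_2})$ and $C^*(\CG_{X_1\cap X_2})$, with gauge actions given by restrictions of $c_E$.

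The final step identifies the reductions as $\CG_{X_i}=G(F_i,A_i)$ and $\CG_{X_1\cap X_2}=G(F_0,A_0)$. Elements $[\alpha,\beta,x]$ with $\alpha x,\beta x\in X_i$ have $\alpha,\beta\in F_i^*$ by the hereditary argument above. Equivalence classes and the cylinder topologies agree, since $F_i^*\subseteq E^*$ and basic bisections $[\alpha,\beta,Z_F(u)\cap X_i]$ match. The restricted cocycle is the canonical one. Composing with the $\mathbb T$-equivariant isomorphisms $C^*(G(F,B))\cong C^*(F,B)$ then gives the stated diagram. The main obstacle is this identification, especially that the subspace topology from $\partial_A E$ on $X_i$ coincides with the intrinsic topology of $\partial_{A_i}F_i$. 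I would handle it by showing that $F_i^{\le\infty}$ is clopen-compatible: $Z_E(u)\cap F_i^{\le\infty}=Z_{F_i}(u)$, and each $Z_E(u')$ with $u'\notin F_i^*$ meets $F_i^{\le\infty}$ in the empty set or in a cylinder of $F_i$.
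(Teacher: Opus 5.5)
Your proposal is correct and follows the paper's route: you use the boundary-path decomposition to get disjoint open invariant complements, apply Theorem~\ref{thm:disjoint open invariant sets}, identify the reductions with $G(F_i,A_i)$ and $G(F_0,A_0)$, and transport the diagram along the gauge-equivariant isomorphisms. You also supply the invariance, closedness and topology-matching details that the paper only cites from \cite{BS} or asserts. One correction: drop the tentative identity $X_i=G(F_i)^{(0)}\cap\partial_A E$, which fails whenever $(A_i\cap F_0^0)\setminus A_j\neq\emptyset$. Your second route is the right one, since $A_i\subseteq\operatorname{reg}(F_i)$ makes each singleton $\{u\}$ with $r(u)\in A_i$ a basic open set of $F_i^{\le\infty}$, so $X_i$ is closed in the closed set $F_i^{\le\infty}$.
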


\begin{proof}
We equip the relative boundary path groupoid \(G(E,A)\) with the canonical
continuous cocycle
\[
c_{E}:G(E,A)\to \mathbb Z,
\qquad
c_{E}([\alpha,\beta,x])=|\alpha|-|\beta|.
\]
The corresponding gauge action on \(C^*(G(E,A))\) restricts to the reductions
appearing below.

Set
\begin{align*}
U^{(1)} &= G(E,A)^{(0)} \setminus G(F_1, A_1)^{(0)},\\
U^{(2)} &= G(E,A)^{(0)} \setminus G(F_2, A_2)^{(0)}.
\end{align*}
Then $U^{(1)}$ and $U^{(2)}$ are open $G(E,A)$-invariant subsets
(see \cite[Section 2]{BS}) such that
\(U^{(1)} \cap U^{(2)} = \emptyset\)
(see the proof of \cite[Proposition~5.4]{BS}).

Set
\[
X_i := G(E,A)^{(0)} \setminus U^{(i)}
= G(F_i,A_i)^{(0)},
\qquad i=1,2.
\]
By Proposition~\ref{boundary path decomposition:relative graph}, we have
\[
G(E,A)^{(0)}
=
G(F_1,A_1)^{(0)} \cup G(F_2,A_2)^{(0)},
\]
and
\[
G(F_1,A_1)^{(0)} \cap G(F_2,A_2)^{(0)}
=
G(F_0,A_0)^{(0)}.
\]
Thus, we have that
\[
G(E,A)_{X_i}=G(F_i,A_i),
\qquad i=1,2,
\]
and
\[
G(E,A)_{X_1\cap X_2}=G(F_0,A_0).
\]
Applying Theorem~\ref{thm:disjoint open invariant sets} to the disjoint open
invariant subsets \(U^{(1)}\) and \(U^{(2)}\), with the cocycle \(c_{E}\), we
obtain the following pullback diagram in the category of
\(\mathbb T\)-\(C^*\)-algebras and
\(\mathbb T\)-equivariant \( * \)-homomorphisms:
\[
\begin{tikzpicture}
  \node (P) at (0,2) {$C^*(G(E,A))$};
  \node (A) at (-2,0) {$C^*(G(F_1,A_1))$};
  \node (B) at (2,0) {$C^*(G(F_2,A_2))$};
  \node (C) at (0,-2) {$C^*(G(F_0,A_0))$};

  \draw[->] (P) -- (A);
  \draw[->] (P) -- (B);
  \draw[->] (A) -- (C);
  \draw[->] (B) -- (C);
\end{tikzpicture}
\]
Here the gauge actions on the lower three groupoid \(C^*\)-algebras are induced
by the restrictions of \(c_{E}\), which correspond to the canonical cocycles
on \(G(F_i,A_i)\), \(i=1,2\), and \(G(F_0,A_0)\).

Under the canonical gauge-equivariant isomorphisms
\[
C^*(G(E,A))\cong C^*(E,A),
\qquad
C^*(G(F_i,A_i))\cong C^*(F_i,A_i)
\quad (i=1,2),
\]
and
\[
C^*(G(F_0,A_0))\cong C^*(F_0,A_0),
\]
the above pullback diagram is identified with the desired pullback diagram
in the category of \(\mathbb T\)-\(C^*\)-algebras and
\(\mathbb T\)-equivariant \( * \)-homomorphisms.
\end{proof}

\subsection{Example III: Topological Graphs}

A quadruple \(E=(E^0,E^1,s_E,r_E)\) is called a \emph{topological graph}
if \(E^0\) and \(E^1\) are locally compact Hausdorff spaces,
\(s_E,r_E:E^1\to E^0\) are continuous maps, and \(r_E\) is a local
homeomorphism. Here we follow the source--range convention of \cite{GQT}.

Given a topological graph \(E=(E^0,E^1,s_E,r_E)\), we define the infinite
path space by
\[
E^\infty
:=
\left\{
e_1e_2\cdots \in (E^1)^{\mathbb N}
\mid
r_E(e_k)=s_E(e_{k+1})
\text{ for all } k\ge 1
\right\},
\]
equipped with the subspace topology inherited from the product topology on
\((E^1)^{\mathbb N}\).

We define the following distinguished subsets of the vertex space \(E^0\).
First, set
\[
E^0_{\mathrm{fin}}
:=
\{v\in E^0 \mid \text{there exists a neighbourhood } V \text{ of } v
\text{ such that } s_E^{-1}(V) \text{ is compact}\}.
\]
The set of \emph{infinite emitters} is defined by
\[
E^0_{\mathrm{inf}}
:=
E^0\setminus E^0_{\mathrm{fin}}.
\]
The set of \emph{sink vertices} is given by
\[
E^0_{\mathrm{sink}}
:=
E^0\setminus \overline{s_E(E^1)}.
\]
The set of \emph{regular vertices} is defined by
\[
E^0_{\mathrm{reg}}
:=
E^0_{\mathrm{fin}}\setminus \overline{E^0_{\mathrm{sink}}},
\]
and the set of \emph{singular vertices} is
\[
E^0_{\mathrm{sing}}
:=
E^0\setminus E^0_{\mathrm{reg}}.
\]

For each \(n\ge 1\), we define the set of finite paths of length \(n\) by
\[
E^n
:=
\left\{
e_1\cdots e_n \in (E^1)^n \mid
r_E(e_k)=s_E(e_{k+1})
\text{ for } 1\le k\le n-1
\right\}.
\]

We also set
\[
E^*
:=
\bigsqcup_{n=0}^{\infty}E^n,
\]
where \(E^0\) is identified with the set of paths of length zero. For a
finite path \(\alpha=(e_1,\dots,e_n)\in E^n\), we write
\[
r_E(\alpha)
:=
r_E(e_n).
\]
For \(v\in E^0\), regarded as a path of length zero, we put
\[
r_E(v):=v.
\]
The set of finite paths ending at singular vertices is defined by
\[
E^*_{\mathrm{sing}}
:=
\{\alpha\in E^* \mid r_E(\alpha)\in E^0_{\mathrm{sing}}\}.
\]

The \emph{boundary path space} (cf, \cite[Definition 3.1]{KL2017}) of \(E\) is defined by
\[
\partial E
:=
E^\infty \cup E^*_{\mathrm{sing}}.
\]
We equip \(\partial E\) with the Yeend's topology obtained by adapting
\cite[Definition~3.7]{KL2017} to our source--range convention. More precisely, for
\(S\subseteq E^*\), set
\[
Z_E(S)
:=
\{\mu\in \partial E:
\text{ either } s_E(\mu)\in S,\text{ or there exists }1\le i\le |\mu|
\text{ such that } \mu_1\cdots \mu_i\in S\}.
\]
The topology on \(\partial E\) is generated by the sets
\[
Z_E(U)\cap Z_E(K)^c,
\]
where \(U\subseteq E^*\) is open and \(K\subseteq E^*\) is compact. With this
topology, \(\partial E\) is a locally compact Hausdorff space (see \cite[Definition 3.7]{KL2017} or \cite[Proposition 3.6]{KL2017}).

We end this section with a lemma about convergence of nets on $\partial E$.

\begin{lem}\label{convergence in topological graphs}
    Let $E$ be a topological graph and $(x^{(\lambda)})_{\lambda}$ a net in $\partial E$ converging to $x$. Then
    \begin{enumerate}
        \item $(s(x^{(\lambda)}))_{\lambda}$ converges to $s(x)$ in $E^0$.
        \item For every $1\leq k\leq |x|$ with $k\neq \infty$, there exists $\lambda_0$ such that $|x^{(\lambda)}|\geq k$ and $(x_k^{(\lambda)})_{\lambda\geq\lambda_0}$ converges to $x_k\in E^1$.
    \end{enumerate} 
\end{lem}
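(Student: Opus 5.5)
The plan is to read both statements off the generating sets $Z_E(U)\cap Z_E(K)^c$ of Yeend's topology. Each proof is a contrapositive: assume the required property fails along a cofinal subnet, and exhibit a basic neighbourhood of $x$ that the subnet eventually leaves. This contradicts convergence.

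For (1), let $W\subseteq E^0$ be an open neighbourhood of $s(x)$. Regard $W$ as an open subset of $E^*$; this is allowed because $E^0$ is open in the disjoint union $E^*=\bigsqcup_n E^n$. By definition, $Z_E(W)$ contains every $\mu\in\partial E$ with $s_E(\mu)\in W$. Conversely, if $\mu\in Z_E(W)$ via an initial segment $\mu_1\cdots\mu_i\in W$ with $i\geq 1$, this is impossible, since $W\subseteq E^0$ and $E^i\cap E^0=\emptyset$. Hence
\[
Z_E(W)=\{\mu\in\partial E: s_E(\mu)\in W\}.
\]
Taking $K=\emptyset$, this is a basic open set containing $x$. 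So $x^{(\lambda)}\in Z_E(W)$ eventually, i.e.\ $s(x^{(\lambda)})\in W$ eventually. This proves (1).

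For (2), fix a finite $k\leq |x|$. Let $W_1,\dots,W_k\subseteq E^1$ be open neighbourhoods of $x_1,\dots,x_k$. Put
\[
U:=\bigl(W_1\times\cdots\times W_k\bigr)\cap E^k,
\]
which is open in $E^k$ and hence open in $E^*$. Since $x_1\cdots x_k\in U$, we have $x\in Z_E(U)$.

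Now suppose $\mu\in Z_E(U)$. The clause $s_E(\mu)\in U$ cannot occur, because $U\cap E^0=\emptyset$. So there is some $i$ with $\mu_1\cdots\mu_i\in U\subseteq E^k$. This forces $i=k$, which gives $|\mu|\geq k$ and $\mu_j\in W_j$ for all $j\leq k$.

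By convergence there is $\lambda_0$ such that $x^{(\lambda)}\in Z_E(U)$ for all $\lambda\geq\lambda_0$. Hence, for $\lambda\geq\lambda_0$, we get $|x^{(\lambda)}|\geq k$ and $x^{(\lambda)}_k\in W_k$.

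Two points need care in this step. First, $\lambda_0$ must not depend on the neighbourhood $W_k$. To arrange this, first take $W_1=\cdots=W_k=E^1$, so that $U=E^k$; this gives a $\lambda_0$ with $|x^{(\lambda)}|\geq k$ for all $\lambda\geq\lambda_0$. Then, for arbitrary $W_k$ and $W_j=E^1$ for $j<k$, the corresponding $\lambda$ can be taken beyond $\lambda_0$, since the index set is directed. This shows that $(x^{(\lambda)}_k)_{\lambda\geq\lambda_0}$ converges to $x_k$.

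Second, we must confirm that $E^*$ carries the disjoint-union topology, with each $E^n$ carrying the subspace topology of $(E^1)^n$. This is the convention of \cite{KL2017} behind the definition of $Z_E(U)$, so sets of the form $(W_1\times\cdots\times W_k)\cap E^k$ are genuinely open in $E^*$.

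The main obstacle is the bookkeeping that keeps the length-$0$ and length-$k$ cases of the definition of $Z_E$ from mixing. This is handled by the disjointness $E^i\cap E^k=\emptyset$ for $i\neq k$ used above. Beyond that, the argument only uses the fact that sets of the form $Z_E(U)$ are themselves open; compact sets $K$ never enter.
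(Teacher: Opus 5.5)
Your argument is correct, but it takes a different route from the paper. The paper gives no argument of its own: it cites known descriptions of convergence in $\partial E$, first for sequences and then a general version for nets. You instead derive the two necessary conditions directly from the definition of Yeend's topology.

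Your proof is self-contained and handles nets from the outset, so it needs no passage from sequences. It also shows exactly what is used:
\begin{itemize}
\item $E^*$ carries the disjoint-union topology, so an open $W\subseteq E^0$ and a set $\bigl(W_1\times\cdots\times W_k\bigr)\cap E^k$ are open in $E^*$;
\item each $Z_E(U)$ is itself a basic open set (take $K=\emptyset$);
\item the disjointness $E^i\cap E^k=\emptyset$ for $i\neq k$ keeps the length-$0$ and length-$k$ clauses of $Z_E$ from mixing.
\end{itemize}
Your treatment of the uniformity of $\lambda_0$ is right: you first use $U=E^k$ to force $|x^{(\lambda)}|\geq k$, then use directedness.

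What the citation buys is the full two-sided characterization of convergent nets. That includes the behaviour of $x^{(\lambda)}$ beyond a finite limit path $x$, which is where the compact sets $K$ in the basic open sets genuinely enter. The lemma as stated asserts only the necessary conditions (1) and (2), so your elementary argument suffices. One minor point: your opening sentence announces a contrapositive argument, but what you actually give is direct, which is cleaner.
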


\begin{proof}
    This follows from the description of convergence of nets in $\partial E$. This is done in \cite[Proposition~3.12]{Yeend2006} and \cite[Lemma~4.8]{KL2017} for sequences, and, more generally, in \cite[Theorem~3.10 and Section~5]{Cas2021} for nets.
\end{proof}

\subsubsection{The Deaconu--Renault groupoid}

Let \(E\) be a topological graph. We denote by
\[
\sigma_E:\partial E\setminus E^0_{\mathrm{sing}}\to \partial E
\]
the one-sided shift map. More precisely, if \(x=e_1e_2\cdots\in E^\infty\),
then
\[
\sigma_E(x)=e_2e_3\cdots,
\]
and if \(x=e_1\cdots e_n\in E^*\) with \(n\geq 1\) and
\(r_E(x)\in E^0_{\mathrm{sing}}\), then
\[
\sigma_E(x)=e_2\cdots e_n,
\]
where, when \(n=1\), the latter is understood as the vertex \(r_E(e_1)\).
By \cite[Lemma~6.1]{KL2017}, adapted to our source--range convention,
\(\sigma_E\) is a partial local homeomorphism on \(\partial E\), with
\[
\operatorname{dom}(\sigma_E)=\partial E\setminus E^0_{\mathrm{sing}}.
\]

We denote by
$
\Gamma(\partial E,\sigma_E)
$
the Deaconu--Renault groupoid associated to this partial local homeomorphism.
Thus
\[
\begin{aligned}
\Gamma(\partial E,\sigma_E)
={}&
\{(x,m-n,y)\in \partial E\times \mathbb Z\times \partial E:
m,n\in\mathbb N,\\
&\quad
x\in \operatorname{dom}(\sigma_E^m),\
y\in \operatorname{dom}(\sigma_E^n),\
\sigma_E^m(x)=\sigma_E^n(y)\}.
\end{aligned}
\]
The unit space is identified with \(\partial E\) via
\[
x\mapsto (x,0,x).
\]
The range and source maps are given by
\[
r(x,k,y)=x,
\qquad
s(x,k,y)=y,
\]
and the multiplication and inverse are given by
\[
(x,k,y)(y,l,z)=(x,k+l,z),
\qquad
(x,k,y)^{-1}=(y,-k,x).
\]

There is a canonical continuous cocycle
\[
c_E:\Gamma(\partial E,\sigma_E)\to \mathbb Z
\]
defined by
\[
c_E(x,k,y)=k.
\]
The associated gauge action on \(C^*(\Gamma(\partial E,\sigma_E))\) is the
one induced by this cocycle.

\subsubsection{Adjunctions and Union Graphs}

Let $F$ be a topological graph. 
A topological graph $G$ is called a \emph{subgraph} of $F$, denoted $G \subseteq F$, 
if $G^0 \subseteq F^0$ and $G^1 \subseteq F^1$ are subspaces and the source and range maps of $G$ are given by the restrictions of those of $F$, that is,
$
s_G = s_F|_{G^1}$ and $r_G = r_F|_{G^1}$.

A subset \(G^0\subseteq F^0\) is said to be \emph{positively invariant} if
\[
r_F(e)\in G^0 \implies s_F(e)\in G^0
\]
for every \(e\in F^1\). It is said to be \emph{negatively invariant} if, for
every \(v\in G^0\cap F^0_{\mathrm{reg}}\), there exists \(e\in F^1\) such that
\[
s_F(e)=v
\quad\text{and}\quad
r_F(e)\in G^0.
\]
We say that \(G^0\) is \emph{invariant} if it is both positively and
negatively invariant.

\begin{dfn}(\cite[Definition 3.1]{GQT})
Let $F$ be a topological graph. 
\begin{enumerate}
    \item A subgraph $G$ of $F$ is \emph{closed} if $G^1 \subseteq F^1$ and $G^0 \subseteq F^0$ are closed subspaces.
    \item A closed subgraph $G$ of $F$ is \emph{regular} if $G^0$ is negatively invariant in $F^0$ and $r_F^{-1}(G^0) \subseteq G^1$.
\end{enumerate}
\end{dfn}

\begin{remark}
Let \(G\) be a closed subgraph of \(F\). Then it is easy to see that
\[
r_F^{-1}(G^0)\subseteq G^1
\iff
G^0 \text{ is positively invariant in } F^0
\text{ and }
G^1=G^0F^1G^0,
\]
where
\[
G^0F^1G^0
:=
\{e\in F^1:s_F(e),r_F(e)\in G^0\}.
\]
Consequently,
\[
G \text{ is regular in } F
\iff
G^0 \text{ is invariant in } F^0
\text{ and }
G^1=G^0F^1G^0.
\]
\end{remark}

\begin{dfn}(\cite[Definition 2.6]{GQT})
A \emph{factor map} \( m : E \to F \) between topological graphs \(E\) and \(F\) is a pair of proper continuous maps 
\[
m_1 : E^1 \to F^1, \qquad m_0 : E^0 \to F^0
\]
such that:
\begin{enumerate}
    \item[(F1)] For all \( e \in E^1 \),
    \[
    r_F(m_1(e)) = m_0(r_E(e)) \quad \text{and} \quad s_F(m_1(e)) = m_0(s_E(e)).
    \]

    \item[(F2)] If \( x \in F^1 \) and \( v \in E^0 \) satisfies \( r_F(x) = m_0(v) \), then there exists a unique element \( e \in E^1 \) such that
    \[
    m_1(e) = x \quad \text{and} \quad r_E(e) = v.
    \]
\end{enumerate}
\end{dfn}

Let $E$ and $F$ be topological graphs, $G$ a closed subgraph of $F$ such that $r_F^{-1}(G^0) \subseteq G^1$, and $m_0: G^0 \to E^0$, $m_1: G^1 \to E^1$ proper continuous maps satisfying (F1). Consider the adjunction spaces
\[
E^0 \cup_{m_0} F^0, \quad E^1 \cup_{m_1} F^1
\]
with the canonical quotient maps
\[
q_0: E^0 \sqcup F^0 \to E^0 \cup_{m_0} F^0, \quad
q_1: E^1 \sqcup F^1 \to E^1 \cup_{m_1} F^1.
\]
Then, the \emph{adjunction graph} $E \cup_m F$ is defined by
\[
(E \cup_m F)^0 := E^0 \cup_{m_0} F^0, \qquad (E \cup_m F)^1 := E^1 \cup_{m_1} F^1,
\]
with source and range maps
\[
s_\cup(e) := 
\begin{cases}
s_E(e), & e \in E^1, \\
p_0(s_F(e)), & e \in F^1 \setminus G^1,
\end{cases}
\qquad
r_\cup(e) :=
\begin{cases}
r_E(e), & e \in E^1, \\
r_F(e), & e \in F^1 \setminus G^1,
\end{cases}
\]
where $p_0: F^0 \to (E \cup_m F)^0$ is the canonical map given by the composition
\[
p_0: F^0 \hookrightarrow E^0 \sqcup F^0 \xrightarrow{q_0} E^0 \cup_{m_0} F^0.
\]
By \cite[Proposition 3.9]{GQT},  the adjunction graph $E \cup_m F$ is a topological graph. 
In the special case where $m$ is injective, we write 
$$E \cup F := E \cup_m F$$ 
and refer to it as the \emph{union graph}. 
In this case, we write $E \cap F:=G$ and call it 
\emph{intersection graph}.

\begin{dfn}(\cite[Definition 2.6]{GQT})
A  factor map \( m : E \to F \)  is called {\it regular} if
\begin{enumerate}
    \item[(F3)] $m_0(E^0_{sing}) \subseteq F^0_{sing}.$
\end{enumerate}
\end{dfn}

Let $E$ and $F$ be topological graphs, and let $G$ be a regular closed subgraph of $F$. 
Suppose that $m: G \to E$ is a regular factor map.
 If $m$ is injective, then by \cite[Proposition 3.3]{GQT}, $G = E \cap F$ is a regular closed subgraph of $E$.  
Hence, by \cite[Proposition 3.14]{GQT}, both $E$ and $F$ are regular closed subgraphs of $E \cup F$.  
In this case, we call $E \cup F$ the \emph{regular union graph} (\cite[Definition 3.5]{GQT}).

Therefore, we obtain the following pushout diagram in the category of topological graphs and regular factor maps:

\medskip 
\[
\begin{tikzpicture}[scale=0.9]
  % nodes
  \node (E)  at (-6,2) {$E \cup F$};
  \node (F1) at (-8,0) {$E$};
  \node (F2) at (-4,0) {$F$};
  \node (F0) at (-6,-2) {$E \cap F$.};

  % arrows (labels only on lower arrows)
  \draw[->] (F1) -- (E);
  \draw[->] (F2) -- (E);
  \draw[->] (F0) -- node[midway,left]  {} (F1);
  \draw[->] (F0) -- node[midway,right] {} (F2);
\end{tikzpicture}
\]

\begin{remark}\label{regular inclusion and singular sets}
Let \(m:G\to E\) be an injective factor map, and identify \(G\) with
its image \(m(G)\subseteq E\). Then condition {\rm (F2)} is equivalent to
\[
r_E^{-1}(G^0)\subseteq G^1,
\]
which, in turn, is equivalent to
\[
G^0 \text{ is positively invariant in } E^0
\text{ and }
G^1=G^0E^1G^0.
\]
Under these equivalent conditions, \(G^0\) is positively invariant in
\(E^0\), and hence \cite[Proposition~2.2]{Ka3} shows that condition
{\rm (F3):}
\[
G^0_{\mathrm{sing}}\subseteq E^0_{\mathrm{sing}}
\]
is equivalent to the negative invariance of \(G^0\) in \(E^0\).

Consequently, the injective factor map \(m\) is regular if and only if
\[
G^0 \text{ is invariant in } E^0
\text{ and }
G^1=G^0E^1G^0.
\]
\end{remark}

\begin{lem}\label{lem:closed subgraph}
    If $G$ is a closed regular subgraph of a topological graph $F$, then $\partial G$ is a closed subspace $\partial F$.
\end{lem}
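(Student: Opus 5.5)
The plan is to prove two things: first that $\partial G\subseteq\partial F$ as sets, and then that $\partial G$ is closed in $\partial F$, arguing with nets through Lemma~\ref{convergence in topological graphs}. We will also check that the subspace topology on $\partial G$ agrees with Yeend's topology of $\partial G$. The statement only asks for a closed subspace, but this comparison is what makes the phrase ``closed subspace'' meaningful, so we include it.

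\textbf{Step 1: $\partial G\subseteq\partial F$.} Since $G^0$ and $G^1$ sit inside $F^0$ and $F^1$ with the restricted source and range maps, every path in $G$ is a path in $F$. In particular infinite paths of $G$ lie in $F^\infty$. For a finite path $\alpha\in G^*$ with $r(\alpha)=v\in G^0_{\mathrm{sing}}$, we need $v\in F^0_{\mathrm{sing}}$, that is, $G^0_{\mathrm{sing}}\subseteq F^0_{\mathrm{sing}}$. Regularity gives that $G^0$ is invariant and $G^1=G^0F^1G^0$, so by Remark~\ref{regular inclusion and singular sets} (via \cite[Proposition~2.2]{Ka3}) this inclusion holds. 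Hence $\alpha\in\partial F$.

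\textbf{Step 2: closedness.} Let $(x^{(\lambda)})$ be a net in $\partial G$ converging to $x\in\partial F$. By Lemma~\ref{convergence in topological graphs}(1), $s(x^{(\lambda)})\to s(x)$. These sources lie in $G^0$, which is closed, so $s(x)\in G^0$. By Lemma~\ref{convergence in topological graphs}(2), for each finite $k\le|x|$ the $k$-th edges $x^{(\lambda)}_k$ eventually lie in $G^1$ and converge to $x_k$. Since $G^1$ is closed, $x_k\in G^1$. Thus every edge of $x$ lies in $G^1$ and $x$ is a path in $G$. (Alternatively, positive invariance of $G^0$ together with $r_F^{-1}(G^0)\subseteq G^1$ already forces this once $r(x_k)\in G^0$.) If $x$ is infinite, then $x\in G^\infty\subseteq\partial G$.

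If $x$ is finite, we know $r(x)\in G^0\cap F^0_{\mathrm{sing}}$ and must show $r(x)\in G^0_{\mathrm{sing}}$, i.e. $G^0\cap F^0_{\mathrm{sing}}\subseteq G^0_{\mathrm{sing}}$. This is the reverse direction of the singular-set comparison: for a positively invariant $G^0$ with $G^1=G^0F^1G^0$, one has $G^0\cap F^0_{\mathrm{reg}}\subseteq G^0_{\mathrm{reg}}$ under negative invariance. We plan to take this from \cite[Proposition~2.2]{Ka3}, whose equivalent formulations give $G^0_{\mathrm{sing}}=G^0\cap F^0_{\mathrm{sing}}$.

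If that equality is not available, we will argue directly. Take $v\in G^0\cap F^0_{\mathrm{reg}}$. There is a neighbourhood $V$ of $v$ with $s_F^{-1}(V)$ compact, and then $s_G^{-1}(V\cap G^0)=s_F^{-1}(V)\cap G^1$ is compact, so $v\in G^0_{\mathrm{fin}}$. Negative invariance, applied on a neighbourhood of $v$ inside the open set $F^0_{\mathrm{reg}}$, gives edges of $G$ emitted from every point of that neighbourhood intersected with $G^0$. Hence $v\notin\overline{G^0_{\mathrm{sink}}}$, so $v\in G^0_{\mathrm{reg}}$.

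\textbf{Step 3 (topology comparison).} The generating sets $Z_G(U)\cap Z_G(K)^c$ are traces of $Z_F(U')\cap Z_F(K)^c$. Here $K$ compact in $G^*$ is compact in $F^*$, and we choose $U'$ open in $F^*$ with $U'\cap G^*=U$. Conversely, the trace of an $F$-generator on $\partial G$ is a $G$-generator, since $Z_F(S)\cap\partial G=Z_G(S\cap G^*)$.

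The main obstacle is the finite-path case of Step 2, namely the precise comparison of singular sets. We expect the cited result of Katsura, together with the invariance characterization in Remark~\ref{regular inclusion and singular sets}, to handle it.
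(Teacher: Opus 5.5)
\textbf{The finite case of your Step 2 has a genuine gap.} Your Step 1, and the part of Step 2 showing that a limit $x$ of a net in $\partial G$ is a path in $G$ (so infinite limits lie in $G^\infty$), match the paper's proof. The paper's proof consists of the inclusion via Remark~\ref{regular inclusion and singular sets} plus an appeal to Lemma~\ref{convergence in topological graphs}.

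You reduce the finite case to the vertex inclusion $G^0\cap F^0_{\mathrm{sing}}\subseteq G^0_{\mathrm{sing}}$, equivalently $G^0_{\mathrm{reg}}\subseteq F^0_{\mathrm{reg}}$. This is false for regular closed subgraphs. Take the discrete graph $F$ with vertices $v,w,u$, one edge $f$ from $v$ to $w$, and infinitely many edges $e_n$ from $v$ to $u$. Let $G^0=\{v,w\}$ and $G^1=\{f\}$. Then $r_F^{-1}(G^0)=\{f\}=G^1$, and negative invariance is vacuous because $F^0_{\mathrm{reg}}=\emptyset$, so $G$ is a regular closed subgraph. Yet the breaking vertex $v$ lies in $G^0_{\mathrm{reg}}\cap F^0_{\mathrm{sing}}$. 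The length-zero path $v$ is therefore a path in $G$ lying in $\partial F\setminus\partial G$, so no argument using only vertex data can finish this case.

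Your fallback argument proves $G^0\cap F^0_{\mathrm{reg}}\subseteq G^0_{\mathrm{reg}}$. That is equivalent to $G^0_{\mathrm{sing}}\subseteq F^0_{\mathrm{sing}}$, which is the Step~1 direction again, and the only one \cite[Proposition~2.2]{Ka3} provides. It is not the converse you need.

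\textbf{How to repair it.} You have to use that $x$ is a \emph{limit} of points of $\partial G$. Let $|x|=n$ and suppose, for a contradiction, that $v:=r(x)\in G^0_{\mathrm{reg}}$.

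First suppose cofinally many $x^{(\lambda)}$ have length $n$. By Lemma~\ref{convergence in topological graphs}, along that subnet the ranges $r(x^{(\lambda)})$ converge to $v$, and they lie in $G^0_{\mathrm{sing}}$. This is impossible, since $G^0_{\mathrm{reg}}$ is an open neighbourhood of $v$ in $G^0$.

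Otherwise $|x^{(\lambda)}|>n$ eventually. Pick a compact neighbourhood $D$ of $v$ in $G^0$ with $s_G^{-1}(D)$ compact (possible since $v\in G^0_{\mathrm{fin}}$), and a compact neighbourhood $C'$ of $x$ in $F^n$. The edges $x^{(\lambda)}_{n+1}\in G^1$ have sources converging to $v$. Hence eventually $x^{(\lambda)}\in Z_F(K)$, where $K=\{\mu\in F^{n+1}:\mu_1\cdots\mu_n\in C',\ \mu_{n+1}\in s_G^{-1}(D)\}$ is compact (for $n=0$, read $s_F(\mu)\in C'$). But $Z_F(K)^c$ is an open neighbourhood of $x$, which is a contradiction.

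The second case uses a fact not recorded in Lemma~\ref{convergence in topological graphs} as stated: when the net is eventually longer than $x$, the $(n+1)$-st edges must leave every compact set. This is part of the convergence description in \cite{Yeend2006,Cas2021}, and the paper's one-line appeal to that description must also be using it.

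Your Step~3 is correct but not needed.
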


\begin{proof}
    We start by proving that $\partial G\subseteq \partial F$. Clearly $G^{\infty}\subseteq F^{\infty}$. Given $x\in G^*$ with $r(x)\in G^0_{\mathrm{sing}}$, by Remark \ref{regular inclusion and singular sets}, $r(x)\in F^0_{\mathrm{sing}}$. Hence $x\in \partial F$.

    Since $G$ is a closed subgraph of $F$, that $\partial G$ is closed in $\partial F$ follows from Lemma \ref{convergence in topological graphs}.
\end{proof}

\subsubsection{Example III: Pullbacks from Regular Union Graphs} We now examine when the above pushout yields a pullback diagram of 
$C^*$-algebras from the groupoid perspective.  
To this end, we establish the following proposition.

\begin{prop}\label{boundary path decomposition:topological graph}
Let  $E\cup F$ be a regular union graph such that 
\[
E^0_{\mathrm{sing}} \cap (E \cap F)^0_{\mathrm{reg}} \subseteq F^0_{\mathrm{reg}}.
\] Then, we have the following:
\begin{enumerate}
    \item[(1)] $\partial(E\cup F) = \partial E \cup \partial F$,
    \item[(2)] $\partial(E\cap F) = \partial E \cap \partial F$.
\end{enumerate}
\end{prop}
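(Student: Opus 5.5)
We follow the pattern of the proof of Proposition~\ref{decomposition of the boundary path space:graph}, adapted to topological graphs. Write $H:=E\cup F$ and $G:=E\cap F$, and view $E$, $F$ and $G$ as closed subgraphs of $H$. In a regular union graph, $E$ and $F$ are regular closed subgraphs of $H$, and $G$ is a regular closed subgraph of both $E$ and $F$. By Remark~\ref{regular inclusion and singular sets} this gives three facts. First, $E^0$, $F^0$ and $G^0$ are invariant in the appropriate ambient graph. Second, $E^1=E^0H^1E^0$, $F^1=F^0H^1F^0$, and $G^1=G^0E^1G^0=G^0F^1G^0$. Third, on singular sets we have $E^0_{\mathrm{sing}}\subseteq H^0_{\mathrm{sing}}$, $F^0_{\mathrm{sing}}\subseteq H^0_{\mathrm{sing}}$, and $G^0_{\mathrm{sing}}\subseteq E^0_{\mathrm{sing}}\cap F^0_{\mathrm{sing}}$. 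Lemma~\ref{lem:closed subgraph} then gives $\partial E\cup\partial F\subseteq\partial H$ and $\partial G\subseteq\partial E\cap\partial F$. These are the easy inclusions.

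For $\partial H\subseteq\partial E\cup\partial F$, take $x\in\partial H$. Since $H^0=E^0\cup F^0$ with $E^0\cap F^0=G^0$, and $H^1=E^1\cup F^1$, we can argue on vertices. Positive invariance of $E^0$ (that is, $r_H(e)\in E^0\Rightarrow s_H(e)\in E^0$) means $H^0\setminus E^0=F^0\setminus G^0$ is hereditary. So once $x$ reaches a vertex outside $E^0$, it stays in $F^0\setminus G^0$.

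We split into three cases, as in the graph proof.
\begin{itemize}
\item If $x$ eventually enters $F^0\setminus G^0$, then every earlier range vertex lies in $G^0$. The identities $r_H^{-1}(F^0)\subseteq F^1$ and $r_H^{-1}(G^0)\subseteq G^1$ put every edge of $x$ in $F^1$, so $x$ is a path in $F$.
\item If $x$ eventually enters $E^0\setminus G^0$, the symmetric argument shows $x$ is a path in $E$.
\item Otherwise every vertex of $x$ lies in $G^0$, so $x$ is a path in $G$ and hence in both $E$ and $F$.
\end{itemize}
Infinite paths then lie in $E^\infty$ or $F^\infty$, so they are in $\partial E\cup\partial F$. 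For a finite $x$, put $v=r(x)\in H^0_{\mathrm{sing}}$; we must show $v\in E^0_{\mathrm{sing}}$ whenever $x$ lives in $E$ (respectively, $v\in F^0_{\mathrm{sing}}$ or $E^0_{\mathrm{sing}}$ in the $G$ case).

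This singular-vertex bookkeeping is the main obstacle, because singularity in topological graphs is not pointwise: $H^0_{\mathrm{reg}}=H^0_{\mathrm{fin}}\setminus\overline{H^0_{\mathrm{sink}}}$. We need the converse comparison $E^0\cap H^0_{\mathrm{reg}}\subseteq E^0_{\mathrm{reg}}$, i.e.\ the analogue of ``saturated''. For $v\in E^0\setminus G^0$ we argue locally, since $H^0\setminus F^0=E^0\setminus G^0$ is open in $H^0$ (because $F^0$ is closed). Near such $v$, the graph $H$ coincides with $E$: edges with source near $v$ lie in $E^1$ by positive invariance of $F^0$. Hence the conditions ``$s^{-1}(V)$ compact'' and ``$v\notin\overline{\text{sinks}}$'' agree for $E$ and $H$ at $v$. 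For $v\in G^0$ we reduce to the analogous statement for $G$ inside $E$ and inside $F$. Here we use that $s_H^{-1}(V)=s_E^{-1}(V\cap E^0)\cup s_F^{-1}(V\cap F^0)$, that a union of two compacts is compact, and that a sink of $H$ near $v$ is a sink of $E$ or of $F$. So $v$ regular in $H$ forces $v$ regular in $E$ or in $F$; if $v$ is regular in both it is excluded by assumption, and otherwise $x\in\partial E\cup\partial F$ anyway. Precisely: if $v$ is singular in $E$ then $x\in\partial E$; if not, $v$ is regular in $E$, and if also $v\in F^0_{\mathrm{reg}}$ we want $v\in H^0_{\mathrm{reg}}$, contradicting $v\in H^0_{\mathrm{sing}}$. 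I would verify this finiteness/sink gluing carefully, possibly invoking \cite[Proposition~2.2]{Ka3} or results of \cite{GQT} on singular sets of union graphs instead.

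For $\partial E\cap\partial F\subseteq\partial G$, take $x$ in both. Its edges lie in $E^1\cap F^1=G^1$, since edges of $H$ are glued only along $G$. So $x$ is a path in $G$, and infinite $x$ is done. If $x$ is finite with $v=r(x)\in E^0_{\mathrm{sing}}\cap F^0_{\mathrm{sing}}\cap G^0$, we suppose $v\in G^0_{\mathrm{reg}}$. The hypothesis $E^0_{\mathrm{sing}}\cap G^0_{\mathrm{reg}}\subseteq F^0_{\mathrm{reg}}$ then gives $v\in F^0_{\mathrm{reg}}$, a contradiction. Hence $v\in G^0_{\mathrm{sing}}$ and $x\in\partial G$.
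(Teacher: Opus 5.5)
Your proposal is correct and has the same overall structure as the paper's proof. You show that every path of $E\cup F$ lies in $E$ or in $F$, treat finite boundary paths through the singular sets, and prove (2) exactly as the paper does: the edges lie in $E^1\cap F^1=(E\cap F)^1$, and the hypothesis rules out $v\in(E\cap F)^0_{\mathrm{reg}}$. Your path decomposition is vertex-based, mirroring the directed-graph proof: $H^0\setminus E^0=F^0\setminus G^0$ is hereditary, and $r_H^{-1}(G^0)\subseteq G^1$. The paper instead uses an edge-based ``no switching'' argument; the two are equivalent.

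The real difference is in the finite case. The paper simply asserts $(E\cup F)^0_{\mathrm{sing}}=E^0_{\mathrm{sing}}\cup F^0_{\mathrm{sing}}$ and then handles the mixed cases by pulling a path ending in $G^0$ back into $G^*$. You actually justify the nontrivial inclusion $H^0_{\mathrm{sing}}\subseteq E^0_{\mathrm{sing}}\cup F^0_{\mathrm{sing}}$, and you do it locally. Off $G^0$, $H$ agrees with $E$ over the open set $H^0\setminus F^0$. At $v\in G^0$, you glue compact preimages and use $H^0_{\mathrm{sink}}\subseteq E^0_{\mathrm{sink}}\cup F^0_{\mathrm{sink}}$. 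This makes your version more self-contained than the paper's at exactly the point where topological graphs differ from discrete ones.

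Do tidy up the directions in that paragraph.
\begin{itemize}
\item The inclusion $E^0\cap H^0_{\mathrm{reg}}\subseteq E^0_{\mathrm{reg}}$ that you call the ``converse comparison'' is equivalent to $E^0_{\mathrm{sing}}\subseteq H^0_{\mathrm{sing}}$, which you already have from (F3). What you need is $E^0_{\mathrm{reg}}\setminus G^0\subseteq H^0_{\mathrm{reg}}$ and $E^0_{\mathrm{reg}}\cap F^0_{\mathrm{reg}}\subseteq H^0_{\mathrm{reg}}$.
\item The sentence ``$v$ regular in $H$ forces $v$ regular in $E$ or in $F$'' states the wrong implication. Your compactness and sink ingredients prove that $v$ regular in both $E$ and $F$ implies $v$ regular in $H$.
\item The case where $v$ is regular in both is excluded by that fact together with $v\in H^0_{\mathrm{sing}}$, not by the standing hypothesis. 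The hypothesis is needed only for (2).
\end{itemize}
Your ``Precisely'' sentence states the argument correctly.

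When you write the gluing out, you need a neighbourhood $K$ of $v$ in $H^0$ whose traces $K\cap E^0$ and $K\cap F^0$ lie inside the given neighbourhoods with compact source preimages. Such a $K$ exists because $E^0$ and $F^0$ form a finite closed cover of $H^0$, so the topology of $H^0$ is coherent with them. Taking $K$ compact makes $s_E^{-1}(K\cap E^0)$ and $s_F^{-1}(K\cap F^0)$ closed subsets of compact sets, hence compact.
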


\begin{proof}
\noindent\text{(1):} 
We first show that every path in \(E\cup F\) is a path either in \(E\) or in
\(F\).

We begin with the following claim: no path in \(E\cup F\) can switch from an
edge in \(F^1\setminus (E\cap F)^1\) to an edge in \(E^1\). Suppose, toward a
contradiction, that there exists a path \(x=e_1e_2\cdots \in E^\infty \cup E^*\) and \(k\) such
that
\[
e_k\in F^1\setminus (E\cap F)^1,
\qquad
e_{k+1}\in E^1.
\]
Since \(x\) is a path, we have
\[
r_\cup(e_k)=s_\cup(e_{k+1}).
\]
By the definition of the adjunction graph,
\[
r_\cup(e_k)=r_F(e_k),
\qquad
s_\cup(e_{k+1})=s_E(e_{k+1}),
\]
and hence
\[
r_F(e_k)=s_E(e_{k+1}).
\]
Since \((E\cup F)^0=E^0\cup_{m_0}F^0\) and points are identified only along
\((E\cap F)^0\), it follows that
\[
r_F(e_k)\in (E\cap F)^0.
\]
Because \(E\cap F\) is a regular closed subgraph of \(F\), we have
\[
r_F^{-1}((E\cap F)^0)\subseteq (E\cap F)^1.
\]
Thus \(e_k\in (E\cap F)^1\), contradicting
\(e_k\in F^1\setminus (E\cap F)^1\).

Similarly, since \(E\cap F\) is a regular closed subgraph of \(E\), no path in
\(E\cup F\) can switch from an edge in \(E^1\setminus (E\cap F)^1\) to an edge
in \(F^1\).

Therefore a path in \(E\cup F\) cannot contain both an edge in
\(E^1\setminus (E\cap F)^1\) and an edge in
\(F^1\setminus (E\cap F)^1\). Indeed, if such two edges appeared in the same
path, then at some point the path would have to switch from one side to the
other, which is impossible by the preceding paragraph. Since the edges in
\((E\cap F)^1\) belong to both \(E^1\) and \(F^1\), every path in \(E\cup F\) is
a path either in \(E\) or in \(F\). Hence
\[
(E\cup F)^\infty=E^\infty\cup F^\infty,
\qquad
(E\cup F)^*=E^*\cup F^*.
\]

Now we prove the equality of boundary path spaces.
First, by Lemma~\ref{lem:closed subgraph}, $$\partial E\cup\partial F\subseteq \partial(E\cup F).$$
For the reverse inclusion, let \(x\in\partial(E\cup F)\). If \(x\) is infinite, then by the
above equality
\[
(E\cup F)^\infty=E^\infty\cup F^\infty,
\]
we have \(x\in E^\infty\cup F^\infty\), and hence
\(x\in\partial E\cup\partial F\).

Suppose now that \(x\) is finite, and put \(v=r_\cup(x)\). Since
\(x\in\partial(E\cup F)\), we have
\[
v\in (E\cup F)^0_{\mathrm{sing}}
=
E^0_{\mathrm{sing}}\cup F^0_{\mathrm{sing}}.
\]
Also, by the equality \((E\cup F)^*=E^*\cup F^*\), the path \(x\) belongs to
\(E^*\) or to \(F^*\).

If \(x\in E^*\) and \(v\in E^0_{\mathrm{sing}}\), then
\(x\in\partial E\). Similarly, if \(x\in F^*\) and
\(v\in F^0_{\mathrm{sing}}\), then \(x\in\partial F\).

It remains only to consider the possible mixed cases. Suppose, for instance,
that
\[
x\in E^*,
\qquad
v\in F^0_{\mathrm{sing}}.
\]
Since \(x\in E^*\), we have \(v\in E^0\), and since
\(v\in F^0_{\mathrm{sing}}\subseteq F^0\), we get
\[
v\in E^0\cap F^0=(E\cap F)^0.
\]
We claim that \(x\in (E\cap F)^*\). If \(x\) has length zero, this is
immediate. If \(x=(e_1,\ldots,e_n)\) has positive length, then
\(r_E(e_n)=v\in (E\cap F)^0\). Since \(E\cap F\) is a regular closed subgraph
of \(E\), we have
\[
r_E^{-1}((E\cap F)^0)\subseteq (E\cap F)^1,
\]
and therefore \(e_n\in (E\cap F)^1\). Hence
\(s_E(e_n)\in (E\cap F)^0\). Since
\(r_E(e_{n-1})=s_E(e_n)\), the same argument gives
\(e_{n-1}\in (E\cap F)^1\). Repeating this argument, we obtain
\[
e_1,\ldots,e_n\in (E\cap F)^1.
\]
Thus \(x\in (E\cap F)^*\subseteq F^*\). Since \(v\in F^0_{\mathrm{sing}}\), it
follows that
\[
x\in\partial F.
\]
The other mixed case, namely \(x\in F^*\) and \(v\in E^0_{\mathrm{sing}}\), is
handled in the same way, using that \(E\cap F\) is a regular closed subgraph of
\(F\). Hence \(x\in\partial E\).

Therefore every finite boundary path of \(E\cup F\) belongs to
\(\partial E\cup\partial F\). We have shown that
\[
\partial(E\cup F)\subseteq \partial E\cup\partial F.
\]

Combining the two inclusions, we obtain
\[
\partial(E\cup F)=\partial E\cup\partial F.
\]

\noindent\text{(2):}
The inclusion $\partial(E\cap F)\subseteq \partial E\cap \partial F$ follows from Lemma~\ref{lem:closed subgraph}.

For the converse, let $x\in \partial E\cap \partial F$. Since $x$ is a path in both $E$ and $F$,
every edge of $x$ lies in
$E^1\cap F^1=(E\cap F)^1.$
Hence $x$ is a path in $E\cap F$.
If $x$ is infinite, then
$$
x\in (E\cap F)^\infty\subseteq \partial(E\cap F).
$$
Suppose now that $x$ is finite and put $v=r(x)$. Since
$x\in \partial E\cap \partial F$, we have
$
v\in E^0_{\mathrm{sing}}\cap F^0_{\mathrm{sing}}.
$
We claim that
\[
v\in (E\cap F)^0_{\mathrm{sing}}.
\]
Suppose, to the contrary, that
\[
v\in (E\cap F)^0_{\mathrm{reg}}.
\]
Since $v\in E^0_{\mathrm{sing}}$, the hypothesis
\[
E^0_{\mathrm{sing}}\cap (E\cap F)^0_{\mathrm{reg}}
\subseteq F^0_{\mathrm{reg}}
\]
implies that
\[
v\in F^0_{\mathrm{reg}}.
\]
This contradicts $v\in F^0_{\mathrm{sing}}$. Therefore
$
v\notin (E\cap F)^0_{\mathrm{reg}},
$
and hence
$
v\in (E\cap F)^0_{\mathrm{sing}}.
$
Thus $x\in \partial(E\cap F)$.

Consequently,
$
\partial E\cap \partial F\subseteq \partial(E\cap F).
$
\end{proof}

\begin{remark}\label{symmetric:topological graph}
The condition
\[
E^0_{\mathrm{sing}}\cap (E\cap F)^0_{\mathrm{reg}}
\subseteq F^0_{\mathrm{reg}}
\]
is symmetric in \(E\) and \(F\). Indeed, it is equivalent to
\[
E^0_{\mathrm{sing}}\cap F^0_{\mathrm{sing}}
\cap (E\cap F)^0_{\mathrm{reg}}
=
\emptyset,
\]
which is in turn equivalent to
\[
F^0_{\mathrm{sing}}\cap (E\cap F)^0_{\mathrm{reg}}
\subseteq E^0_{\mathrm{reg}}.
\]
\end{remark}

The next result obtains the pullback theorem for regular union graphs from
\cite[Theorem~4.2]{GQT} by applying the groupoid pullback
Theorem~\ref{thm:disjoint open invariant sets}.

\begin{thm}[Pullback for regular union graphs]\label{thm:regular-union-pullback}
Let \(E\cup F\) be a regular union graph such that
\[
E^0_{\mathrm{sing}}\cap (E\cap F)^0_{\mathrm{reg}}
\subseteq F^0_{\mathrm{reg}}.
\]
Then the corresponding topological graph \(C^*\)-algebras form a pullback
diagram in the category of \(\mathbb T\)-\(C^*\)-algebras and
\(\mathbb T\)-equivariant \( * \)-homomorphisms:
\begin{equation}\label{diagram:topological graph algebras}
\begin{aligned}
\begin{tikzpicture}
  \node (P) at (0,2) {$C^*(E\cup F)$};
  \node (A) at (-2,0) {$C^*(E)$};
  \node (B) at (2,0) {$C^*(F)$};
  \node (C) at (0,-2) {$C^*(E\cap F)$};

  \draw[->] (P) -- (A);
  \draw[->] (P) -- (B);
  \draw[->] (A) -- (C);
  \draw[->] (B) -- (C);
\end{tikzpicture}
\end{aligned}
\end{equation}
where the \(\mathbb T\)-actions are the canonical gauge actions.
\end{thm}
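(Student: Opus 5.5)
The plan follows the pattern of Theorems~\ref{ex 1:graph pullback theorem} and~\ref{thm:relative-graph-pullback}. Write $\CG:=\Gamma(\partial(E\cup F),\sigma_{E\cup F})$ with its canonical cocycle $c_{E\cup F}(x,k,y)=k$. By Proposition~\ref{boundary path decomposition:topological graph}(1), $\partial(E\cup F)=\partial E\cup\partial F$. Since $E$ and $F$ are regular closed subgraphs of $E\cup F$, Lemma~\ref{lem:closed subgraph} shows that $\partial E$ and $\partial F$ are closed in $\partial(E\cup F)$.

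The first step is to check that $\partial E$ and $\partial F$ are $\CG$-invariant. This amounts to showing that $\partial E$ is invariant under the shift and under its inverse branches. If $x\in\partial E$ and $(x,m-n,y)\in\CG$ with $\sigma^m(x)=\sigma^n(y)$, then $\sigma^n(y)$ is a tail of $x$, so it is a path in $E$. The prefix $y_1\cdots y_n$ is a path in $E\cup F$ ending at a vertex of $E^0$. I will show that this prefix lies in $E$.

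The argument is as follows. If some edge $y_k$ lies in $F^1\setminus(E\cap F)^1$, then by the ``no switching'' argument from the proof of Proposition~\ref{boundary path decomposition:topological graph}, all later edges lie in $F^1$. The range of $y_n$ then lies in $F^0\cap E^0=(E\cap F)^0$. Since $r_F^{-1}((E\cap F)^0)\subseteq(E\cap F)^1$, we get $y_n\in(E\cap F)^1$. Propagating backwards along the path, every earlier edge also lies in $(E\cap F)^1$, which contradicts the choice of $y_k$. The degenerate case where $\sigma^n(y)$ is a vertex is handled by the same backward propagation.

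Hence $y$ is a path in $E$ lying in $\partial(E\cup F)$. Its ends are the same as those of $x$, so if $y$ is finite, its range is a range of $x$ and lies in $E^0_{\mathrm{sing}}$. Therefore $y\in\partial E$. The same argument applied to $F$ shows that $\partial F$ is invariant.

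Next, set $U_1:=\partial(E\cup F)\setminus\partial E$ and $U_2:=\partial(E\cup F)\setminus\partial F$. These are open invariant sets, and they are disjoint by Proposition~\ref{boundary path decomposition:topological graph}(1). Their complements are $X_1=\partial E$ and $X_2=\partial F$, and $X_1\cap X_2=\partial(E\cap F)$ by Proposition~\ref{boundary path decomposition:topological graph}(2). Theorem~\ref{thm:disjoint open invariant sets} then gives a $\mathbb T$-equivariant pullback of $C^*(\CG)$ over $C^*(\CG_{\partial E})$, $C^*(\CG_{\partial F})$ and $C^*(\CG_{\partial(E\cap F)})$, with the gauge actions coming from restrictions of $c_{E\cup F}$.

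It remains to identify the reductions as Deaconu--Renault groupoids: $\CG_{\partial E}=\Gamma(\partial E,\sigma_E)$, and similarly for $F$ and $E\cap F$. The underlying sets agree because $\sigma_{E\cup F}$ restricted to $\partial E$ equals $\sigma_E$, with matching domains. For the domains, a vertex $v\in\partial E$ lies in $E^0_{\mathrm{sing}}$, and by (F3) $E^0_{\mathrm{sing}}\subseteq(E\cup F)^0_{\mathrm{sing}}$; conversely, a vertex of $\partial(E\cup F)$ that lies in $\partial E$ is singular in $E$. The main obstacle I expect is the topological part of this identification. One must show that Yeend's topology on $\partial E$ coincides with the subspace topology from $\partial(E\cup F)$, and that the Deaconu--Renault topologies match. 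For this I would use the net characterization of convergence from Lemma~\ref{convergence in topological graphs} together with the closedness of $E^i$ in $(E\cup F)^i$. The cocycle clearly restricts to $c_E$ under this identification.

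Finally, I will compose with the standard gauge-equivariant isomorphisms $C^*(\Gamma(\partial E,\sigma_E))\cong C^*(E)$, and check that they carry the restriction maps to the canonical quotient maps in diagram~\eqref{diagram:topological graph algebras}. It suffices to check this on the generators, and it yields the claimed pullback.
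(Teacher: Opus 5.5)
Your proposal is correct and follows essentially the same route as the paper. Both arguments decompose $\partial(E\cup F)$ via Proposition~\ref{boundary path decomposition:topological graph}, apply Theorem~\ref{thm:disjoint open invariant sets} to $\Gamma(\partial(E\cup F),\sigma_{E\cup F})$ with the disjoint open invariant complements of $\partial E$ and $\partial F$, identify the reductions with the Deaconu--Renault groupoids of $E$, $F$ and $E\cap F$ (cocycles included), and then transport along the gauge-equivariant isomorphisms. Your explicit shift-invariance argument and your attention to matching the topologies fill in points the paper only asserts. For the topology you will need the two-sided convergence criterion from the references behind Lemma~\ref{convergence in topological graphs}, since that lemma as stated records only the necessary direction.
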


\begin{proof}
Put $H:=E\cup F$.  Let $\Gamma(\partial H,\sigma_H)$
be the Deaconu--Renault groupoid associated with the one-sided shift map
$\sigma_H:\partial H\setminus H^0_{\mathrm{sing}}\to\partial H.$
We equip \(\Gamma(\partial H,\sigma_H)\) with the canonical cocycle
\[
c_H:\Gamma(\partial H,\sigma_H)\to\mathbb Z,
\qquad
c_H(x,k,y)=k.
\]
The restriction of \(c_H\) to each reduction considered below induces the
corresponding gauge action on its groupoid \(C^*\)-algebra.

By Proposition~\ref{boundary path decomposition:topological graph}, we have
$
\partial H=\partial E\cup\partial F
$
and
$
\partial E\cap\partial F=\partial(E\cap F).
$
Define
\[
\Omega_E:=\partial H\setminus\partial E,
\qquad
\Omega_F:=\partial H\setminus\partial F.
\]
Then, by Lemma~\ref{lem:closed subgraph}, the subsets \(\partial E\) and
\(\partial F\) are closed in  \(\partial H\), and it is easy to check that they are
\(\Gamma(\partial H,\sigma_H)\)-invariant.
 Hence \(\Omega_E\) and
\(\Omega_F\) are open and
\(\Gamma(\partial H,\sigma_H)\)-invariant. Moreover,
\[
\Omega_E\cap\Omega_F
=
\partial H\setminus(\partial E\cup\partial F)
=
\emptyset.
\]
We also have
\[
\begin{aligned}
\Gamma(\partial H,\sigma_H)_{\partial E}
\cap
\Gamma(\partial H,\sigma_H)_{\partial F}
&=
\Gamma(\partial H,\sigma_H)_{\partial E\cap\partial F}\\
&=
\Gamma(\partial H,\sigma_H)_{\partial(E\cap F)}.
\end{aligned}
\]

Applying Theorem~\ref{thm:disjoint open invariant sets} to the disjoint open
invariant subsets \(\Omega_E\) and \(\Omega_F\), together with the cocycle
\(c_H\), we obtain a pullback diagram in the category of
\(\mathbb T\)-\(C^*\)-algebras and \(\mathbb T\)-equivariant
\( * \)-homomorphisms:
\[
\begin{tikzpicture}
  \node (P) at (0,2) {$C^*(\Gamma(\partial H,\sigma_H))$};
  \node (A) at (-2.8,0)
  {$C^*(\Gamma(\partial H,\sigma_H)_{\partial E})$};
  \node (B) at (2.8,0)
  {$C^*(\Gamma(\partial H,\sigma_H)_{\partial F})$};
  \node (C) at (0,-2)
  {$C^*(\Gamma(\partial H,\sigma_H)_{\partial(E\cap F)}).$};

  \draw[->] (P) -- (A);
  \draw[->] (P) -- (B);
  \draw[->] (A) -- (C);
  \draw[->] (B) -- (C);
\end{tikzpicture}
\]

We now identify the groupoids occurring in this diagram. Since the shift map
\(\sigma_H\) restricts to the corresponding shift maps on
\(\partial E\), \(\partial F\), and \(\partial(E\cap F)\), respectively,
there are canonical groupoid isomorphisms
\[
\Gamma(\partial H,\sigma_H)_{\partial E}
\cong
\Gamma(\partial E,\sigma_E),
\]
\[
\Gamma(\partial H,\sigma_H)_{\partial F}
\cong
\Gamma(\partial F,\sigma_F),
\]
and
\[
\Gamma(\partial H,\sigma_H)_{\partial(E\cap F)}
\cong
\Gamma(\partial(E\cap F),\sigma_{E\cap F}).
\]
These isomorphisms preserve the canonical cocycles: the restrictions of
\(c_H\) correspond to
\[
c_E(x,k,y)=k,
\qquad
c_F(x,k,y)=k,
\qquad
c_{E\cap F}(x,k,y)=k.
\]
Consequently, the induced \(C^*\)-algebra isomorphisms are
\(\mathbb T\)-equivariant.

Using the canonical gauge-equivariant isomorphisms between topological graph
\(C^*\)-algebras and the \(C^*\)-algebras of their boundary-path groupoids,
we obtain
\[
C^*(\Gamma(\partial H,\sigma_H))
\cong
C^*(H)
=
C^*(E\cup F),
\]
\[
C^*(\Gamma(\partial H,\sigma_H)_{\partial E})
\cong
C^*(E),
\qquad
C^*(\Gamma(\partial H,\sigma_H)_{\partial F})
\cong
C^*(F),
\]
and
\[
C^*(\Gamma(\partial H,\sigma_H)_{\partial(E\cap F)})
\cong
C^*(E\cap F).
\]

Therefore, after applying these \(\mathbb T\)-equivariant isomorphisms to the
above diagram, we obtain the claimed diagram (\ref{diagram:topological graph algebras})
as a pullback in the category of \(\mathbb T\)-\(C^*\)-algebras and
\(\mathbb T\)-equivariant \( * \)-homomorphisms.
\end{proof}

\end{document}